\theoremstyle{plain}
\newtheorem{thm}{Theorem}
\newtheorem{lm}[thm]{Lemma}
\newtheorem{cor}[thm]{Corollary}
\newtheorem{prop}[thm]{Proposition}
\theoremstyle{definition}
\newtheorem{de}[thm]{Definition}
\newtheorem{ex}[thm]{Example}
\newtheorem{re}[thm]{Remark}
\newcommand{\RR}{{\mathbb R}}
\newcommand{\ZZ}{{\mathbb Z}}
\newcommand{\Mg}{\mathcal{M}^\mathrm{trop}_g}
\begin{document}

\begin{abstract}
We prove that in the moduli space of genus-$g$ metric graphs the
locus of graphs with gonality at most $d$ has the classical dimension
\[ \min\{3g-3,2g+2d-5\}. \] 
This follows from a careful parameter count to establish the upper
bound and a construction of sufficiently many graphs with gonality at
most $d$ to establish the lower bound. Here, gonality is the
minimal degree of a non-degenerate harmonic map to a tree that satisfies the
Riemann-Hurwitz condition everywhere. Along the way,
we establish a convenient combinatorial datum capturing such harmonic
maps to trees.
\end{abstract}

\title{On metric graphs with prescribed gonality}

\author[F.~Cools]{Filip Cools}
\address[Filip Cools]{
	Department of Mathematics\\
	Katholieke Universiteit Leuven\\
 	Celestijnenlaan 200b - box 2400, 3001 Leuven, Belgium}
\email{filip.cools@wis.kuleuven.be}

\author[J.~Draisma]{Jan Draisma}
\address[Jan Draisma]{
Department of Mathematics and Computer Science\\
Technische Universiteit Eindhoven\\
P.O. Box 513, 5600 MB Eindhoven, The Netherlands;\\
and Vrije Universiteit Amsterdam, The Netherlands}
\email{j.draisma@tue.nl}

\maketitle
\renewcommand{\phi}{\varphi}

\section{Definitions and result}

\subsection*{Metric graphs}
A {\em topological graph} is a topological space obtained by gluing a
finite, disjoint union of closed intervals along an equivalence relation
on the boundary points. If a topological graph $\Gamma$ is connected and
the intervals from which it is glued are prescribed with a positive length,
then $\Gamma$ becomes a compact metric space with shortest-path metric.
Such a metric space is called a {\em metric graph}. The {\em genus}
of a metric graph is its cycle space dimension. Here is a
metric graph of genus $2$ with edge lengths $a,b,c$:
\begin{center}
\includegraphics{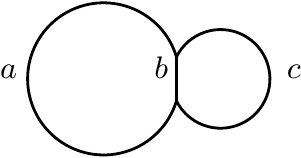}
\end{center}
Every point $v$ in a metric graph has a neighbourhood isometric, for
some positive $\epsilon$, to a finite union of half-open intervals
$[0,\epsilon)$ glued (only) along $0$.  Each of these intervals is called a {\em
half-edge} emanating from $v$, and their number is the {\em valency} of
$v$. We identify two half-edges emanating from $v$ (for different $\epsilon$) if one is contained
in the other.

\subsection*{Harmonic maps}
A map $\phi$ from a metric graph $\Gamma$ to a metric graph $\Sigma$
is called {\em harmonic} if it is continuous, linear with integral
slopes outside a finite number of points, and if it moreover satisfies
the following harmonicity condition at each point $v \in \Gamma$: Fix
a half-edge $e$ emanating from $w:=\phi(v)$, and consider the sum of all
slopes of $\phi$ along half-edges emanating from $v$ that map to $e$. That
sum, denoted $m_\phi(v)$, should be independent of the choice of $e$. 
A harmonic map has a well-defined degree, defined as $\deg \phi:=\sum_{v
\in \Gamma, \phi(v)=w} m_\phi(v)$ for any $w \in \Sigma$.
On the left is an example with $m_\phi(v)=3$, and on the 
right is an example of a degree-2 harmonic map from our earlier
genus-$2$ graph to a metric tree:
\begin{center}
\hfill
\includegraphics{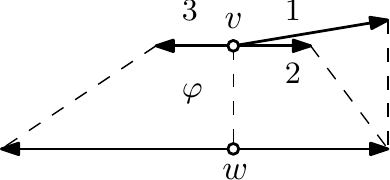}
\hfill 
\includegraphics{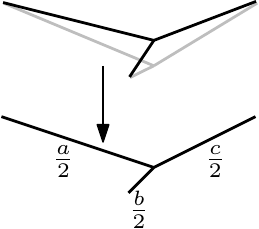}
\hfill 
\ {}
\end{center}

This definition is closely related to the notion of {\em pseudo-harmonic
indexed morphisms} in \cite{Caporaso12} (which generalise harmonic
morphisms from \cite{Baker09}) as follows: there the set-up concerns
ordinary (non-metric) graphs, and the role of the index there is played
by the slope in our definition. Moreover, in \cite{Caporaso12} the
definition is extended to graphs in which the vertices are decorated
with a non-negative genus; we do not do so here.

\subsection*{Riemann-Hur\-witz conditions and tropical morphisms}
Let $\phi:\Gamma\to \Sigma$ be a harmonic map and $v \in \Gamma$. The 
{\em Riemann-Hur\-witz condition} \cite[Definition2.2]{Bertrand11} on $\phi$ at $v$ is the 
inequality \[ k-2 \geq m_\phi(v) \cdot (l-2), \] where $k$ is the valency of $v$ and $l$ the valency of $w:=\phi(v)$.
We will only be interested in harmonic maps that satisfy the
Riemann-Hurwitz condition at all $v \in \Gamma$ and that, moreover, have
{\em non-zero} integral slopes outside a finite number of points. This
latter condition is equivalent to saying that $m_{\phi}(v)$ is strictly
positive at all $v \in \Gamma$. This is related to the condition of {\em
non-degeneracy} in \cite{Caporaso12} as follows: there the non-degeneracy
is required at vertices, and not at ``internal points'' of edges. But
by folding each edge contracted to a point into two, one can create
a harmonic morphism between corresponding metric graphs with non-zero
slopes, albeit that the target graph needs to be modified as discussed
below. The non-zero slope condition is also related to the modifiability
condition in \cite[Remark 2.5]{Bertrand14} and the finiteness condition 
in \cite[Definition 2.4]{ABBR14a}.

We will call harmonic maps satisfying the Riemann-Hurwitz condition
and the non-zero slope condition everywhere {\em tropical morphisms}
between metric graphs.

\subsection*{Modifications}
A {\em modification} of a metric graph $\Gamma$ is any metric graph
$\Gamma'$ obtained from $\Gamma$ by grafting a finite number of
metric trees onto points of $\Gamma$. Given a modification $\Gamma'$
of $\Gamma$ and a tropical morphism $\phi: \Gamma \to \Sigma$, there
exist a modification $\Gamma''$ of $\Gamma'$, a modification $\Sigma'$
of $\Sigma$, and a tropical morphism map $\phi':\Gamma'' \to \Sigma'$
of the same degree as $\phi$ and extending $\phi$, constructed as
follows. For any tree $S$ grafted onto $v \in \Gamma$ when going from
$\Gamma$ to $\Gamma'$, the graph $\Gamma''$ has $m_\phi(v)-1$ additional copies
of $S$ grafted onto $v$ and $m_\phi(v')$ copies of $S$ grafted onto $v'$ for each 
$v'\in\phi^{-1}(\phi(v))\setminus\{v\}$, and $\Sigma'$ has a single copy of $S$ grafted
onto $\phi(v)$. Here we use that $m_\phi(v)$ is positive, which follows
from the fact that slopes are non-zero. The map $\phi'$ equals $\phi$
on $\Gamma$ and maps the copies of $S$ in $\Gamma''$ to the
single copy in $\Sigma'$.

On the right is a modification of the harmonic
morphism on the left: 
\begin{center}
\includegraphics{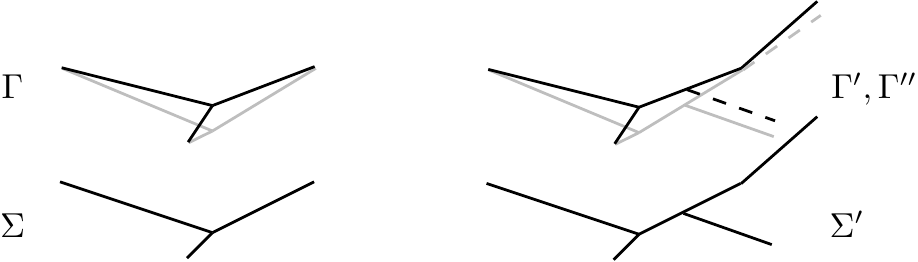}
\end{center}
Here the two solid segments are grafted onto $\Gamma$ to arrive
at $\Gamma'$, and to arrive at $\Gamma''$ the two additional dashed lines
are grafted.

\subsection*{Gonality}
The {\em tree gonality} of a metric graph $\Gamma$ is the minimum degree
of any tropical morphism from any modification $\Gamma'$ of $\Gamma$
to a tree.

There are two notions of graph gonality in the literature, which are both
inspired by the gonality of an algebraic curve. They are tree (or {\em
geometric}) gonality and {\em divisorial} gonality. Each of these comes
in several flavours, e.g. for ordinary or metric graphs \cite{Baker08},
for graphs where the vertices can be decorated with higher genera
\cite{Caporaso12}, and for metrized complexes \cite{Amini15,Luo14}. Yet
another variant is {\em stable gonality}, which is the infimum of
the divisorial gonality over all subdivisions of an ordinary graph
\cite{Cornelissen15}.  In this paper, {\em gonality} will always refer
to tree gonality of metric graphs as defined above.

Given such a tropical morphism $\phi:\Gamma' \to T$ of degree $d$,
and any $w \in T$, the divisor $D':=\sum_{v \in \phi^{-1}(w)} m_\phi(v)
\cdot (v)$ on $\Gamma'$ has degree $d$ and 
rank \cite{Baker06} at least one. Moving the chips of $D$ on grafted
trees to their grafting points, one obtains a divisor $D$ on $\Gamma$
of degree $d$ and rank at least one. In particular, tree gonality is at
least divisorial gonality. Divisorial gonality, in turn, is bounded from
below by the treewidth of a graph or metric graph \cite{Dobben14,Amini14}.

\subsection*{The moduli space of metric graphs}

Fix a natural number $g \geq 2$. For any genus-$g$ graph $G=(V,E)$ (that
is, a connected graph with $|E|-|V|+1=g$; multiple edges and loops are
allowed) with all valencies greater than or equal to $3$, we have $|E| \leq
3g-3$, with equality if and only if all valencies are $3$. The positive
orthant $M_G=\RR_{>0}^E$ of edge lengths parameterises metric graphs of
{\em combinatorial type} $G$. If $H$ is obtained from $G$ by contracting
an edge $e$ (not a loop), then $H$ is another genus-$g$ graph, and we glue
$M_H$ to $M_G$ as the coordinate hyperplane $\RR_{>0}^{E \setminus \{e\}}
\times \{0\}$. Similarly, if $H=(V',E')$ is a graph isomorphic to $G$
via an isomorphism $\sigma:E \to E'$, then we glue $M_H$ to $M_G$ via
the induced bijection $\RR_{>0}^{E'} \to \RR_{>0}^{E}$. In particular,
we do this for automorphisms of $G$.  This gluing of all $M_G$ where $G$
runs over all genus-$g$ graphs yields a topological space known as $\Mg$,
the {\em moduli space of metric graphs of genus $g$}. It has dimension
$3g-3$. By construction, every genus-$g$ graph $G=(V,E)$ defines a map
$\RR_{>0}^E \to \Mg$. Every genus-$g$ metric graph is a modification
of some graph represented by a point in $\Mg$.  The generalisation of
$\Mg$ to graphs with weighted vertices and marked points is discussed,
for instance, in \cite[Section 3]{Caporaso12b}, and the topology of these
spaces is the topic of \cite{Kozlov09}.

\subsection*{Main results}

We will prove the following two theorems.

\begin{thm} \label{thm:dimension}
For $d,g \geq 2$ the locus of metric graphs in $\Mg$ that have gonality at
most $d$ is closed of dimension $\min\{2g+2d-5,3g-3\}$. In particular,
the locus of genus-$g$ metric graphs of gonality at least $\lceil
(g+2)/2 \rceil$ is open and dense in $\Mg$. 
\end{thm}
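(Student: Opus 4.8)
The plan is to prove the theorem in three steps --- the locus is closed; its dimension is at most $\min\{2g+2d-5,\,3g-3\}$; this value is attained --- and then to read off the ``in particular''. For the last point, set $d=\lceil(g+2)/2\rceil-1$: one checks $2g+2d-5\le 3g-4<3g-3$, so by the dimension formula the gonality-$\le d$ locus is closed of dimension $<3g-3$, hence nowhere dense, and its complement is precisely the set of genus-$g$ metric graphs of gonality $\ge\lceil(g+2)/2\rceil$.

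The first task is to make precise the combinatorial datum promised in the abstract. A tropical morphism $\phi\colon\Gamma'\to T$ of degree at most $d$, with $T$ a tree and $\Gamma'$ a modification of a genus-$g$ graph, is determined up to isometry by the combinatorial types of $T$ and of a sufficiently fine cellular model of $\Gamma'$, the induced cellular map, and the slopes, together with the edge lengths of $T$: in such a model each edge of $\Gamma'$ maps monotonically onto an edge of $T$ with constant slope $m$, and so has length equal to the length of its image divided by $m$. Only finitely many such data occur because of the tropical Riemann--Hurwitz identity: summing the nonnegative defects $R_\phi(v):=k_v-2-m_\phi(v)(l_{\phi(v)}-2)$ over all $v\in\Gamma'$, and using that $T$ is a tree, yields
\[
\sum_{v\in\Gamma'}R_\phi(v)=2g+2d-2 .
\]
After trimming superfluous grafted trees this bounds the combinatorics, so the gonality-$\le d$ locus is the union of the images in $\Mg$ of finitely many polyhedral cones, one per datum, parametrised by the edge lengths of $T$. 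Closedness then amounts to checking that each such image contains its relative boundary in $\Mg$: a face of a cone is a degeneration in which some edge lengths of $T$ go to $0$, contracting those edges of $T$ together with the edges of $\Gamma'$ above them, and one verifies directly that harmonicity, the Riemann--Hurwitz inequality and positivity of all $m_\phi(v)$ survive the contraction --- using that the limiting point of $\Mg$ still has genus $g$, so no essential cycle of $\Gamma'$ has collapsed --- so the limit again admits a degree-$\le d$ tropical morphism to a tree.

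For the upper bound I would treat one datum at a time. Let $\Gamma_0\subseteq\Gamma'$ be the genus-$g$ core (iteratively delete valency-$1$ vertices) and $T_0:=\phi(\Gamma_0)$, a subtree of $T$; the point of $\Mg$ attached to a metric realisation depends only on the edge lengths of $\Gamma_0$, each of which is the length of its image in $T_0$ divided by a slope, so the image of the cone has dimension at most $|E(T_0)|$ computed in the coarsest model of $T_0$ in which every bivalent vertex carries nonzero defect above it. Above each leaf of $T_0$ sits a vertex $v$ of $\Gamma_0$; such a $v$ is a vertex of the minimal model of $\Gamma_0$ (a core edge maps monotonically onto a path in $T_0$, so its interior points cannot reach a leaf), hence $k_v\ge 3$, and since $\phi(v)$ has valency $1$ this forces $R_\phi(v)=k_v-2+m_\phi(v)\ge 2$. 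These defects, together with the at-least-one unit above each retained bivalent vertex, lie over distinct points of $T_0$, so if $L_0$ and $B_0$ are the numbers of leaves and of retained bivalent vertices then $2L_0+B_0\le 2g+2d-2$, while the handshake lemma bounds the number of vertices of $T_0$ of valency $\ge 3$ by $L_0-2$; combined with $|E(T_0)|=|V(T_0)|-1$ this gives $|E(T_0)|\le 2L_0+B_0-3\le 2g+2d-5$. Maximising over the finitely many data and using the trivial bound $3g-3$, the locus has dimension at most $\min\{2g+2d-5,\,3g-3\}$.

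For the matching lower bound I would exhibit, for every $d$, a family of genus-$g$ metric graphs of gonality $\le d$ of exactly this dimension, generalising the hyperelliptic chain of loops: build an explicit degree-$d$ tropical morphism from a genus-$g$ metric graph onto a metric tree $T_0$ designed so that all the inequalities in the upper-bound count become equalities simultaneously --- the defect budget $2g+2d-2$ is spent so that $|E(T_0)|$ reaches $2g+2d-5$, or so that the core realises a trivalent genus-$g$ graph with $3g-3$ independent edge lengths once $2g+2d-5\ge 3g-3$ --- and then vary the edge lengths of $T_0$ and check that the induced map into $\Mg$ has full rank on the parameters that matter. In the regime $2g+2d-5\ge 3g-3$ this family is dense, so, lying inside the closed gonality-$\le d$ locus, it shows that locus is all of $\Mg$; equivalently a generic genus-$g$ metric graph has gonality exactly $\lceil(g+2)/2\rceil$. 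I expect the real obstacle to be this constructive half: the upper-bound count is clean once the combinatorial datum is set up, but pinning down covers whose combinatorics make the count sharp --- so that the lower bound reaches the upper bound rather than stopping short --- while also arranging the datum so that the stratification is genuinely finite and the metric of $\Gamma'$ is linear in that of $T$, is where the work concentrates.
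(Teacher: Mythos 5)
Your upper-bound count, the finite stratification into polyhedral cones, the closedness argument, and the deduction of the ``in particular'' from the case $d=\lceil(g+2)/2\rceil-1$ all follow essentially the paper's route. The one substantive difference is that you replace the paper's inductively proved inequality (Proposition~\ref{prop:ineq}) by the exact global Riemann--Hurwitz identity $\sum_v R_\phi(v)=2g+2d-2$, which is correct (it follows from $\sum_v(k_v-2)=-2\chi(\Gamma')=2g-2$ and $\sum_v m_\phi(v)(l_{\phi(v)}-2)=-2d\,\chi(T)=-2d$) and arguably cleaner. Two local claims need repair, though not the conclusion: a core vertex $v$ over a leaf of $T_0$ need not satisfy $k_v\ge 3$ (the far point of a loop folded $2{:}1$ onto a segment has $k_v=2$), yet then $m_\phi(v)\ge 2$, so the inequality $R_\phi(v)=k_v-2+m_\phi(v)\ge 2$ that you actually use still holds; and you must first trim $T$ down to the image of the core (as the paper does before its parameter count) so that such a $\phi(v)$ genuinely has valency $1$ in $T$, since otherwise the total defect over a leaf of $T_0$ can drop to $1$.

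The genuine gap is the lower bound: you state a plan (``build an explicit degree-$d$ tropical morphism \dots\ and check that the induced map into $\Mg$ has full rank'') and correctly predict that this is where the difficulty lies, but you do not supply the construction, and without it the theorem is only half proved --- in particular the claim that the dimension is \emph{attained}, and hence that for $2g+2d-5\ge 3g-3$ the gonality-$\le d$ locus is all of $\Mg$, is unsupported. The paper's missing ingredient is an induction on $d$ by \emph{gluing in a tripod}: starting from the hyperelliptic locus for $d=2$, of dimension $2g-1=e(g,2)$, one passes from a family for $(g-2,d-1)$ to one for $(g,d)$ by choosing three points $u,v,w$ on the graph, adjoining a new trivalent vertex joined to them by edges of lengths $a+2a'$, $b+2b'$, $c+2c'$, and extending the gluing datum by one additional copy of the (suitably extended) tree glued in at exactly three points. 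This raises $g$ by $2$ and $d$ by $1$ while adding exactly six free parameters (the positions of $u,v,w$ and the lengths $a',b',c'$), matching $e(g,d)=e(g-2,d-1)+6$; the cases $g=2,3$ need separate treatment because of automorphisms. Note also that for \emph{tree} gonality --- the notion used in the theorem --- it does not suffice to exhibit degree-$d$ rank-one divisors as one would on a chain of loops, since tree gonality dominates divisorial gonality; one must actually construct the harmonic morphisms, which is precisely what the tripod induction does.
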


This theorem comprises two inequalities, and both will be proved by
purely combinatorial means. Using the Kempf-Kleiman-Laksov existence
result for special divisors \cite{Kempf71,Kleiman72} and 
\cite[Corollary 3.25 and Corollary 4.28]{ABBR14a} (which are variants
of Baker's specialisation lemma \cite[Lemma 2.8]{Baker08} and of 
Conrad's result \cite[Corollary B.3]{Baker08}, respectively), 
it follows that in fact {\em all} metric graphs in $\Mg$
have gonality at most $\lceil (g+2)/2 \rceil$. 
We will give a constructive, purely combinatorial
proof for the following statement.

\begin{thm} \label{thm:construction}
Let $g \geq 1$ be a natural number. For any genus-$g$ graph $G=(V,E)$ all
of whose vertices have valency $3$, the positive orthant $(\RR_{>0})^E$
contains a non-empty open cone $C_G$ whose image in $\Mg$ consists
entirely of metric graphs with gonality exactly $d:=\lceil (g+2)/2
\rceil$. Moreover, $C_G$ can be chosen such that every metric graph
represented by a point in $C_G \cap \ZZ_{>0}^E$ has a
degree-$d$ divisor of rank $1$ supported at integral points.
\end{thm}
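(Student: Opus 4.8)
The plan is to prove the bound ``gonality $\le d$'', where $d:=\lceil(g+2)/2\rceil$, for an explicit non-empty open cone of edge lengths by constructing a tropical morphism of degree $d$ from a modification to a metric tree, and then to upgrade this to ``gonality $=d$'' by a dimension count. Indeed, because $G$ is trivalent, $M_G=\RR_{>0}^E$ is full-dimensional, of dimension $|E|=3g-3$, whereas the locus of metric graphs of gonality $\le d-1$ is closed of dimension at most $2g+2(d-1)-5=2g+2d-7$ (its closedness and the dimension upper bound being parts of Theorem~\ref{thm:dimension} independent of the present theorem, applied with $d$ replaced by $d-1\ge 2$, which is legitimate once $g\ge 3$), and $2g+2d-7<3g-3$ since $2d-4<g$ holds for $d=\lceil(g+2)/2\rceil$. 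So once the construction below yields a non-empty open cone $C_G^0$ of edge lengths with gonality $\le d$, its complement in $C_G^0$ of the (closed, scale-invariant, lower-dimensional) locus of gonality $\le d-1$ is still a non-empty open cone and has gonality exactly $d$; this is our $C_G$. For $g=2$ (hence $d=2$) no deletion is needed, since every metric graph of positive genus has gonality at least $2$: a degree-$1$ tropical morphism would be an isometry onto a tree. Thus everything reduces to the construction.

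For the construction I would use the promised combinatorial datum for tropical morphisms to trees. Fix a spanning tree $S$ of $G$; the $g$ edges outside $S$ each close up a fundamental cycle. I would group these $g$ cycles into $\lfloor g/2\rfloor$ pairs, plus one leftover cycle when $g$ is odd, and record for each group a \emph{routing}: a prescription of how the cycles of the group, and the tree paths joining them, are swept out by a single pair of adjacent sheets of a degree-$d$ map to a metric tree $T$. The arithmetic is the crux of why this is possible: $d$ sheets afford exactly $d-1=\lceil g/2\rceil$ pairwise foldings, and a single folding absorbs one group---a pair of cycles by a double bubble between its two sheets, a leftover cycle by a single bubble---so that the degree equals $1+\lceil g/2\rceil=d$. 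From such a datum I would read off: the metric tree $T$, whose combinatorics is dictated by $S$ and the routings; a modification $\Gamma'$ of the metric realisation of $G$, in which a metric tree is grafted at each point that has to be sent into a direction of $T$ not otherwise covered---precisely so that the harmonicity identity can be met there with non-zero slope---with $T$ enlarged correspondingly as in the paragraph on modifications; and a piecewise-linear map $\phi:\Gamma'\to T$ with integral slopes. I would then verify, at every $v\in\Gamma'$, the harmonicity identity, the inequality $k-2\ge m_\phi(v)(l-2)$, and non-vanishing of all slopes, with $m_\phi(v)=1$ away from the finitely many turning points of bubbles, where $m_\phi(v)=2$. Gluing the pieces of $\phi$ along $T$ forces finitely many linear equalities (which I would use to pin down auxiliary lengths of grafted trees and of $T$) and finitely many strict linear inequalities with integer coefficients among the edge lengths of $G$---in essence, within each paired group one $S$-path must be long enough for the grafted trees to absorb its length discrepancy with its partner's path---and these inequalities cut out $C_G^0$. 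Since they only require certain tree paths to dominate certain others, there is always slack, so $C_G^0$ is a non-empty open cone.

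Given the degree-$d$ morphism $\phi:\Gamma'\to T$, the final assertion follows by taking the divisor $D'=\sum_{v\in\phi^{-1}(w)}m_\phi(v)\cdot(v)$ for a vertex $w$ of $T$ chosen so that its fibre consists of vertices of $G$ together with grafting points. As recalled in the subsection on gonality, $D'$ has degree $d$ and rank at least $1$ \cite{Baker06}; pushing the chips lying on grafted trees down to their grafting points yields a divisor of degree $d$ and rank at least $1$ on $\Gamma$, whose rank is automatically exactly $1$ wherever the gonality equals $d$. Since the slopes of $\phi$ and the coefficients of the inequalities defining $C_G^0$ are integral---and, by a suitable choice of datum, the positions of the grafting points are integer-linear in the edge lengths---a length vector in $C_G\cap\ZZ_{>0}^E$ places the grafting points and the points of the chosen fibre at integral points of $\Gamma$; and $C_G$ meets $\ZZ_{>0}^E$ because it is a non-empty open cone.

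The main obstacle is producing a routing datum uniformly---for every trivalent $G$, not merely for structured families such as chains of loops---using only $d-1$ foldings and a mutually consistent system of length inequalities, and verifying the Riemann-Hurwitz inequality $k-2\ge 2(l-2)$ at turning points of bubbles that abut graftings. I would attack this by induction on $g$: peel off one of the $g$ non-tree edges, then restore trivalence by suppressing the resulting valency-$2$ vertices and pruning leaves, reaching a trivalent graph of genus $g-1$, with genus $2$---the theta and the dumbbell graph---as the base of the induction, to be handled by hand. Removing one fundamental cycle lowers $d$ by $0$ or $1$: in the former case ($g$ even) the peeled cycle is paired with a previously unpaired one and the corresponding folding grows from a single to a double bubble; in the latter ($g$ odd) the peeled cycle gets a fresh sheet and its own single-bubble folding. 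Reattaching the relevant bubble to the morphism supplied by the inductive hypothesis---and arranging the peeling so that the smaller graph stays connected and trivalent and so that its good cone is compatible with the length constraints imposed by the reattached bubble---is where the real work lies.
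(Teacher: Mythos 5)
Your outer logic is sound and matches the paper's: the ``exactly $d$'' clause does reduce to ``at most $d$'' by removing from the cone the closed, lower-dimensional, scale-invariant locus of gonality $\le d-1$ (using the upper bound of Theorem~\ref{thm:dimension} with $d-1$), and the rank-one divisor does come from a fibre of the constructed morphism with chips pushed to their grafting points. But the entire content of the theorem is the uniform construction of a degree-$d$ tropical morphism for \emph{every} trivalent $G$, and that is exactly the step you leave open. Your proposed induction peels off one fundamental cycle at a time (genus drops by $1$), and in the even-genus step the degree does not increase, so the restored cycle must be absorbed into a folding between two sheets that the inductive hypothesis has already routed elsewhere. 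There is no argument that this re-routing can be done compatibly with the existing gluing datum, nor that the Riemann--Hurwitz inequality survives where the new folding meets old ones; you correctly flag this as ``where the real work lies,'' but that is the theorem. The paper sidesteps this entirely by dropping the genus by $2$: delete a trivalent vertex $y$ whose removal keeps $G$ connected (genus $g-2$), apply induction, and glue in a \emph{tripod} --- a brand-new sheet $T_{d+1}$ attached to the old datum at only three points, so the degree goes up by exactly $1$ and no existing sheet is ever re-routed; the leftover case, where every vertex disconnects $G$, is precisely the cactus graphs, which get a separate induction by splitting at a cut vertex and regrafting copies of the two trees to restore harmonicity.

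The integrality claim has a second, related gap. It is not true that integrality of the edge lengths automatically places every fibre over an integral point of $T$ at integral points of $\Gamma$: slopes $2$ halve distances, and the paper's Example~\ref{ex:final} exhibits integral points of the graph whose fibres contain strictly half-integral points. To get a rank-one divisor supported on the integral set you must carry through the induction a \emph{distinguished} point $v_0$ in the integral set with $\phi^{-1}(\phi(v_0))\cap\Gamma$ contained in the integral set (and, for the cactus case, the stronger condition $\phi^{-1}(\phi(S'))\subseteq S'$ together with a prescribed ramification point when the genus is odd). ``Integer-linearity of the grafting points in the edge lengths'' does not by itself produce such a $v_0$, so this part of your argument also needs the strengthened inductive bookkeeping that the paper builds into its statement of the induction.
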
 

It has been conjectured that in fact {\em all} metric graphs with integral
edge lengths admit such a divisor \cite[Conjecture 3.10]{Baker08},
but our methods do not imply that result. We further remark that by
\cite{Hladky13}, on a {\em metric} graph with integral edge lengths,
the rank of a divisor supported on the integral points equals its rank
when regarded as a divisor on the natural {\em ordinary graph} whose
vertices are the integral points (see also \cite{Luo11}). 

In \cite[Theorem B]{Cornelissen15} an upper bound is established for
the {\em stable gonality} of an ordinary graph, which is defined there
as the infimum of the tree gonality over all subdivisions of the graph.
Our Theorem~\ref{thm:construction} implies the special case of that theorem in
which all vertices have valency at most three.

The fact that the locus of metric graphs of gonality (at least)
$\lceil (g+2)/2 \rceil$ is open and dense in $\Mg$ is an exact tropical
analogue of the corresponding statement for algebraic curves.  This is
interesting as, so far, only the (divisorial) gonality of rather specific
graphs (such as chains of loops) was well-understood \cite{Cools10a}.
Using the aforementioned variant of Baker's specialization lemma, our
theorem implies that a general curve of genus $g$ has gonality at least
$\lceil (g+2)/2 \rceil$ (the ``non-existence part'' of Brill-Noether
theory). The idea that one does not need a {\em specific} graph to prove
this statement but that, rather, a suitable dimension count suffices,
goes back to a conference talk by Mikhalkin in 2011 \cite{Mikhalkin11}.

\begin{re}
The dimension $2g+2d-5$ in Theorem~\ref{thm:dimension} is equal to the
dimension of the gonality-$d$ locus in the moduli space of genus-$g$
curves \cite{Segre28}. Applying the tropicalisation map described
in \cite{Abramovich15} to the latter locus might yield a different
proof of the fact that the dimension in our theorem is {\em at least}
$2g+2d-5$: by the specialisation lemma, this map sends the classical
gonality-$g$ locus into our gonality-$g$ locus, and its image should
still have dimension $2g+2d-5$. On the other hand, \cite[Theorem
5.4]{Amini14b} shows an example of a genus-$27$ ordinary metric graph
which, equipped with suitable edge lengths, has gonality $4$ while it is
not the tropicalisation of any $4$-gonal curve of genus $27$.  It seems
very worthwhile to work out the details of the precise correspondence
between the classical gonality-$d$ locus and the corresponding tropical
locus---but our present goal to establish Theorems~\ref{thm:dimension}
and~\ref{thm:construction} using purely combinatorial means.
\end{re}

Our paper is organised as follows. In Section~\ref{sec:Gluing} we
present a combinatorial datum that captures a metric graph together with
a tropical morphism of degree $d$ to a tree. We call this a {\em gluing
datum}. Using this datum, in Section~\ref{sec:Upper} we prove the upper
bound on the dimension of the locus of gonality-$d$ metric graphs in
$\Mg$. Finally, in Section~\ref{sec:Lower} we construct the cone $C_G$
and variants of it for lower-than-maximal gonality, thus showing that
the upper bound is in fact the right dimension.

The most important problem that we leave open is to find a combinatorial
construction of a degree-$\lceil (g+2)/2 \rceil$ tropical morphism $\phi$
from (a modification of) {\em every} metric graph $\Gamma$ of genus $g$
to a tree. Moreover, since our construction of such morphisms for graphs
in the cone $C_G$ depends continuously on the edge lengths, and since the
moduli space of metric graphs of genus $g$ is connected in codimension
one by \cite{Caporaso12b}, it is natural to ask whether the space of
such pairs $(\Gamma,\phi)$ is in a suitable sense connected. Both of
these problems are adressed in forthcoming work by the second author
with Alejandro Vargas \cite{Draisma17b}.

\section{The gluing datum} \label{sec:Gluing}

We define the following combinatorial gadget.

\begin{de}
A {\em gluing datum} $(T,d,\sim)$ is a tuple consisting of a metric
tree $T$, a natural number $d$, and an equivalence relation $\sim$
on the disjoint union $S:=T_1 \sqcup \cdots \sqcup T_d$ of $d$ copies
of $T$ satisfying the following properties, in which $\psi_i:T \to T_i$
stands for the identification of $T$ with its $i$-th copy.
\begin{enumerate}
\item If $v,w \in S$ satisfy $v \sim w$, then there exist $u \in T$ and
$i,j \in [d]$ such that $\psi_i(u)=v \in T_i$ and $\psi_j(u)=w \in T_j$.

\item For any pair $i,j$ the set $\{u \in T \mid \psi_i(u) \sim
\psi_j(u)\}$ has finitely many connected components, each of which
is closed.

\item The topological graph $\Gamma$ obtained from $S$ by
identifying points along the equivalence relation $\sim$ is connected.

\item For every $w \in T$ and $i \in [d]$ we have the inequality 
\[ (\sum_e k_e(w,i)) - 2 \geq m(w,i) (l(w)-2) \]
described further below. 
\end{enumerate}
\end{de}

We will elaborate on the last item further below.  Here is an example with
$d=3$ leading to a genus-3 graph; the colors are purely decorative. In
the last picture, dangling trees have been removed.
\begin{center}
\includegraphics{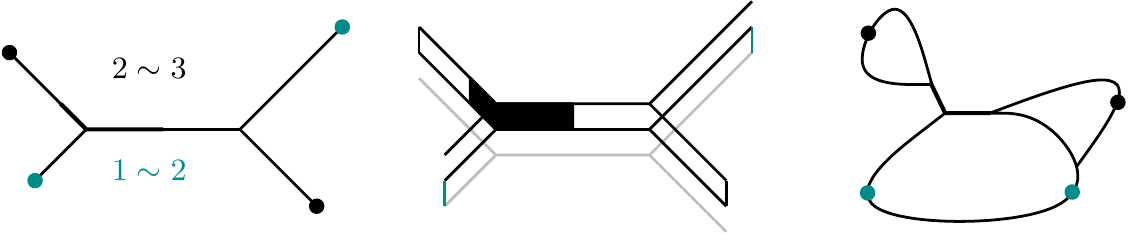}
\end{center}

It is convenient to think of the $d$ copies of $T$ as lying above
each other. Then the first condition says that a point is glued only
to points vertically above or below it. The second condition says that
gluing between two trees happens only along finitely many closed subsets.
Given $u \in T$, if there exist $i,j \in [d]$ for which $u$ is a leaf
of some tree in the forest in (2), then $u$ is called an {\em endpoint}
of the gluing datum.

We turn the topological graph $\Gamma$ into a metric graph as follows: if
for $v \in \Gamma$ the point $\phi(v)$ is not an endpoint, then locally
near $v$ a fixed number $d_v$ of the $T_i$ were glued together. Near
$v$ we give $\Gamma$ the metric of $T$ divided by $d_v$, so that $\phi:
\Gamma \to T$ has slope equal to $d_v$ near $v$.

A gluing datum and a point $w \in T$ together give rise to an equivalence
relation $\sim_w$ on the set $[d]$ defined by $i \sim_w j :\Leftrightarrow
\psi_i(w) \sim \psi_j(w)$.  By the closedness of the gluing point sets,
the map $w \mapsto \sim_w$ is semicontinous in the following sense:
for every $w \in T$ there is an open, connected neigbourhood $U$ of $w$
in $T$ such that $\sim_u$ is constant on each connected component of $U
\setminus \{w\}$ and $\sim_w$ is a coarsening of each of these. 

The last condition in the definition can now be made explicit as follows. Given
$w \in T$ and $i \in [d]$, let $m(w,i)$ be the cardinality of the equivalence
class of $i$ in $\sim_w$ and let $l(w)$ be the valency of $w$ in $T$. For
each half-edge $e \subseteq U$ emanating from $w$ let $k_e(w,i)$ be the
number of equivalence classes into which the $\sim_w$-class of $i$
splits in the refinement $\sim_u$ for $u \in e \setminus \{w\}$. Then
\[ (\sum_e k_e(w,i)) - 2 \geq m(w,i) \cdot (l(w)-2), \]
where the sum is over all half-edges in $T$ emanating from $w$. 

\begin{prop}
Let $(T,d,\sim)$ be a gluing datum, and let $\Gamma=S/\!\!\sim$ be the
topological space obtained from it, equipped with the aforementioned
metric. Then the natural map $\phi:\Gamma \to T$ is a tropical morphism
of degree $d$.
\end{prop}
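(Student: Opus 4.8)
The plan is to verify, point by point, the three defining features of a tropical morphism for the natural projection $\phi:\Gamma\to T$: continuity together with piecewise-linearity with integral slopes; the harmonicity condition with the associated degree being $d$; and finally the Riemann-Hurwitz inequality together with strict positivity of $m_\phi$. The key tool throughout is the semicontinuity of $w\mapsto\sim_w$ stated just before the proposition: around any $w\in T$ there is a connected neighbourhood $U$ such that $\sim_u$ is constant on each component of $U\setminus\{w\}$ and $\sim_w$ coarsens each of these. This lets me do all the local bookkeeping in a single such $U$.

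First I would fix $w\in T$ and such a neighbourhood $U$, and analyse $\phi^{-1}(U)$. A point of $\Gamma$ over $u\in U\setminus\{w\}$ is a $\sim_u$-class of copies, so the fibre over such $u$ consists of the $\sim_u$-classes of $[d]$; as $u$ runs over a half-edge $e$ of $T$ at $w$, these classes are constant, so $\phi^{-1}(e\setminus\{w\})$ is a disjoint union of open segments, one for each $\sim_u$-class, $u\in e$. The metric assigned to $\Gamma$ makes $\phi$ have slope $d_v$ on the segment corresponding to a class of size $d_v$; this is a positive integer, giving integrality and non-vanishing of slopes away from the finitely many points lying over endpoints and over $w$'s. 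Continuity at a point $v$ over $w$: as $u\to w$ along a half-edge $e$, the $\sim_u$-classes merge into $\sim_w$-classes, so finitely many segments of $\phi^{-1}(e\setminus\{w\})$ limit onto $v$, matching the description of $\Gamma=S/\!\!\sim$ as a topological quotient; this is exactly condition (2) (finitely many closed gluing components) ensuring the quotient is a well-behaved metric graph and $\phi$ is continuous and piecewise linear.

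Next I would compute $m_\phi(v)$ for $v\in\phi^{-1}(w)$ corresponding to a $\sim_w$-class $I\subseteq[d]$, and check harmonicity. Fix a half-edge $e$ of $T$ at $w$. The half-edges of $\Gamma$ emanating from $v$ that map into $e$ correspond to the $\sim_u$-classes ($u\in e$ near $w$) contained in $I$; there are $k_e(w,|I|\text{-repr})=k_e(w,i)$ of them for any $i\in I$, and such a class of size $s$ carries slope $s$. The sum of these slopes is the total number of copies in $I$, namely $m(w,i)=|I|$, independently of the choice of $e$. Hence $\phi$ is harmonic at $v$ with $m_\phi(v)=m(w,i)=|I|>0$, which also gives the non-zero slope/strict positivity condition. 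Summing over all fibre points, $\sum_{v\in\phi^{-1}(w)}m_\phi(v)=\sum_{\text{classes }I}|I|=d$, so $\deg\phi=d$. Finally, the Riemann-Hurwitz condition at $v$ reads $k-2\ge m_\phi(v)(l-2)$ where $k$ is the valency of $v$ in $\Gamma$ and $l=l(w)$; but $k=\sum_e k_e(w,i)$ (summing over the half-edges of $T$ at $w$) and $m_\phi(v)=m(w,i)$, so this is precisely the inequality imposed in item (4) of the definition of gluing datum.

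The main obstacle I anticipate is not any single inequality — those are essentially built into the axioms — but rather the care needed to justify that $\Gamma=S/\!\!\sim$, with the piecewise-rescaled metric, is genuinely a metric graph in the sense of the paper (a compact length space locally modelled on glued half-intervals) and that the rescaling is consistent across the finitely many special points. Here I would lean on condition (2) to see that only finitely many rescaling constants $d_v$ occur along any edge of $T$ and that they change only at endpoints, so the metric is well-defined and $\phi$ is linear with the stated slopes between consecutive special points; connectedness of $\Gamma$ is condition (3); compactness follows since $S$ is a finite union of compact trees and $\sim$ is a closed relation by (2). Once $\Gamma$ is known to be a metric graph, the local computation above at each $w\in T$ (and at each endpoint, where the same analysis applies with some classes being leaves) completes the verification that $\phi$ is a harmonic map of degree $d$ satisfying Riemann-Hurwitz and the non-zero slope condition everywhere, i.e.\ a tropical morphism.
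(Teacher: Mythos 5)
Your proof is correct and follows essentially the same route as the paper's: identify the half-edges of $\Gamma$ at a fibre point $v$ over $w$ with the classes of the refined relation along each half-edge of $T$, sum their sizes to get $m_\phi(v)=|I|$ independently of the half-edge, read off the degree as $\sum_I |I| = d$, and observe that axiom (4) is verbatim the Riemann--Hurwitz inequality since $k=\sum_e k_e(w,i)$ and $m_\phi(v)=m(w,i)$. Your extra care about the quotient being a genuine metric graph (via condition (2) and the universal property of the quotient topology) is a sound elaboration of what the paper treats more briefly.
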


\begin{proof}
The map $\phi$ is continuous; this follows from the universal property
of the quotient map $S \to \Gamma=S/\sim$, namely, that any map from
$S$ into a topological space such that points equivalent under $\sim$
are mapped to the same point (here the map $S \to T$) factorises through
a continuous map from $\Gamma$ (here $\phi$). The slope of $\phi$
along any half-edge emanating from a point $v$ that is not an endpoint is 
$d_v>0$.

To see that $\phi$ is harmonic, let $v \in \Gamma$ and let $e$ be a half-edge
emanating from $w:=\phi(v)$ in $T$. Then the points near (but unequal to)
$w$ along $e$ induce a fixed equivalence relation $\sim_e$ on $[d]$,
which refines the equivalence relation $\sim_w$ on $[d]$.  Let $I
\subseteq [d]$ be the equivalence class of $\sim_w$ consisting of
those $i$ for which $v$ is the image of $\psi_i(w)$. This class
decomposes as a disjoint union $I_1 \sqcup \cdots \sqcup I_k$ of
equivalence classes under $\sim_e$. The latter classes correspond bijectively
to the half-edges of $\Gamma$ emanating from $v$ that map to $e$. On
the half-edge
corresponding to $I_j$, the metric has been defined such that the slope of
$\phi$ equals $|I_j|$. Adding up all these slopes yields $m_\phi(v):=|I|$,
which is an invariant of $v$ and in particular independent of the
half-edge
$e$. This proves harmonicity. The degree is $d$ because that is the sum
of the cardinalities of all equivalence classes of $\sim_w$.

Finally, to establish the Riemann-Hurwitz conditions for $\phi$ at the
the image $v$ in $\Gamma$ of $\psi_i(w)$, we note that $m(w,i)$ is the
sum of all the slopes of $\phi$ along each edge $e$ emanating from $w$
and that $\sum_e k_e(w,i)$ is the valency of $v$ in $\Gamma$. Hence
the inequality (4) is precisely the Riemann-Hurwitz condition. 
\end{proof}

As we will prove next, every tropical morphism to a tree arises from
a suitable gluing datum. This datum is not unique. First, there is the
obvious ambiguity arising from permuting the copies $T_1,\ldots,T_d$. But
in fact there is more ambiguity, as will become apparent in the following
proof.

\begin{prop} 
Let $\phi:\Gamma \to T$ be a tropical morphism. Then there exists a
gluing datum $(T,d,\sim)$ that gives rise to $\phi$.
\end{prop}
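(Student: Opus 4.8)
Given a tropical morphism $\phi:\Gamma \to T$ of degree $d$, the plan is to manufacture the equivalence relation $\sim$ on $S = T_1 \sqcup \cdots \sqcup T_d$ by choosing, compatibly over all of $T$, a labelling of the ``sheets'' of $\phi$ by the set $[d]$. First I would fix a point $w_0 \in T$ away from the branch points of $T$ and away from the (finitely many) images of points of $\Gamma$ where some slope exceeds $1$ or where $\Gamma$ has a vertex; over such a generic $w_0$ the fibre $\phi^{-1}(w_0)$ consists of exactly $d$ points, each with $m_\phi = 1$, and I pick an arbitrary bijection of this fibre with $[d]$. The idea is then to transport this labelling along $T$: walking away from $w_0$, as long as we stay in the locus where the fibre has $d$ distinct points, the $d$ sheets are locally trivial coverings and the labelling extends uniquely; when sheets come together at a point $v \in \Gamma$ with $m_\phi(v) = m > 1$, the $m$ labels arriving at $v$ from the $w_0$-side get identified there, and all $m$ of them are carried forward in parallel along each half-edge of $\Gamma$ emanating from $v$ (on such a half-edge the slope is $m$, so ``$m$ copies of $T$ running along that edge''). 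Formally, I would define, for each $i \in [d]$ and each $w \in T$, the point $\psi_i(w) \in \Gamma$ to be the image-point carrying label $i$ in the fibre over $w$, and set $\psi_i(w) \sim \psi_j(w)$ iff these are literally the same point of $\Gamma$.

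The second step is to verify the four axioms of a gluing datum for this $\sim$. Axiom (1) is immediate from the construction, since $v \sim w$ only when both arise as $\psi_i(u), \psi_j(u)$ for a common $u = \phi(v) = \phi(w)$. Axiom (3), connectedness of $S/\!\!\sim$, follows because $S/\!\!\sim$ is tautologically identified with $\Gamma$ (every point of $\Gamma$ lies over some $u \in T$ and hence is $\psi_i(u)$ for at least one $i$, and the $\psi_i$ are continuous where defined), and $\Gamma$ is connected. Axiom (4) is exactly the Riemann–Hurwitz condition on $\phi$, rewritten through the dictionary established in the previous proposition: $m(w,i)$ is the $m_\phi$-value at the point $\psi_i(w)$, $\sum_e k_e(w,i)$ is its valency in $\Gamma$, and $l(w)$ is the valency of $w$ in $T$, so the inequality in (4) is precisely $k - 2 \ge m_\phi(v)(l-2)$. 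Axiom (2) — that $\{u \in T : \psi_i(u) \sim \psi_j(u)\}$ has finitely many connected components, each closed — follows from the facts that $\phi$ is linear with finitely many breakpoints, $\Gamma$ and $T$ are finite metric graphs, and the locus where two chosen sheets coincide is a finite union of closed segments of $T$; closedness is a continuity/limit argument using that $\Gamma$ is compact and $\phi$ continuous.

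The real content — and the main obstacle — is making the ``transport the labelling'' step rigorous and showing it is globally well-defined, i.e.\ that the $\psi_i$ are consistently defined on all of $T$ and not just along one path from $w_0$. The danger is monodromy: going around a cycle in $T$... but $T$ is a \emph{tree}, so it is simply connected, and this is exactly what kills the obstruction. Concretely, I would argue by induction on the (finite) set of breakpoints: order $T$ as rooted at $w_0$, and extend the labelling one ``edge segment'' of the relevant subdivision at a time, checking at each vertex $w$ that the combinatorial data are compatible. The key local lemma is that at a point $w \in T$ with the incoming half-edge carrying a block of $m$ sheets fused at a point $v$, the Riemann–Hurwitz inequality at $v$ guarantees there are enough half-edges at $v$ (counted with slope) to redistribute those $m$ sheet-labels among the outgoing half-edges of $T$ at $w$; one assigns the labels arbitrarily but records the assignment so that the resulting $\sim_u$ refines $\sim_w$ in each outgoing direction, matching the semicontinuity built into the definition. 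Since $T$ is a tree, every point is reached by a unique path from $w_0$, so there is never a consistency condition to check between two different routes, and the induction terminates after finitely many steps, producing the desired gluing datum. Finally I would observe that $\phi$ and the metric it induces agree with the tropical morphism $\Gamma \to T$ recovered from $(T,d,\sim)$ in the previous proposition, because near any non-endpoint the number $d_v$ of fused sheets equals the slope of the original $\phi$, so the two metrics on $\Gamma$ coincide.
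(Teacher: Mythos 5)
Your overall strategy is the same as the paper's: root the tree at a base point, label the sheets there, and transport the labelling (equivalently, the partition $\Pi_w$ of $[d]$) along $T$, using the fact that $T$ is simply connected so no consistency between different routes ever needs checking. However, there is a genuine gap at the very first step. You claim that a generic point $w_0 \in T$, chosen away from branch points and away from the ``finitely many images of points where some slope exceeds $1$'', has a fibre of $d$ distinct points each with $m_\phi=1$. This is false: the slope of $\phi$ is locally constant on segments, so the locus where it exceeds $1$ is a union of whole segments, not a finite set, and there need not exist \emph{any} point of $T$ with a fully split fibre. (Simplest example: $\Gamma$ a segment mapped with slope $2$ onto a segment of half its length is a degree-$2$ tropical morphism in which every fibre is a single point with $m_\phi=2$; for positive genus, a two-loop dumbbell already has a whole segment of $T$ over which the fibre does not split.) Your construction therefore cannot start as described. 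The paper repairs exactly this by first \emph{modifying} $\phi$: graft an interval onto a leaf of $T$ and correspondingly graft $m_\phi(v)$ intervals onto each $v$ in the fibre, so that over the new leaf the fibre genuinely consists of $d$ points with $m_\phi=1$; alternatively one could start the induction from a non-trivial partition $\Pi_{w_0}$ whose part sizes are the numbers $m_\phi(v)$. Either fix is easy, but some such step is required.

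Two smaller points. First, the existence of the local partitions at a merge point is guaranteed by \emph{harmonicity}, not by the Riemann--Hurwitz inequality as you assert: harmonicity says that for each outgoing half-edge $e$ of $T$ at $w$ the slopes of the half-edges of $\Gamma$ at $v$ lying over $e$ sum to $m_\phi(v)$, which is exactly what lets you partition the $m_\phi(v)$ labels into parts of those sizes along each direction. The Riemann--Hurwitz condition plays no role in the construction itself; it is only used at the end to verify axiom (4) of the gluing datum. Second, your phrase ``all $m$ of them are carried forward in parallel along each half-edge of $\Gamma$ \dots (on such a half-edge the slope is $m$)'' is misleading: the half-edges of $\Gamma$ over a given half-edge of $T$ generally have various slopes summing to $m$, and the $m$ labels must be split into parts whose sizes match those slopes and whose number matches the number of such half-edges; you state this correctly later, but the earlier sentence should be amended.
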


\begin{proof}
The number $d$ is defined as the degree of $\phi$. Without loss
of generality, we may assume that the pre-image of some leaf $w_0$
of $T$ consists of $d$ distinct leaves $v_1,\ldots,v_d$ of $\Gamma$
with $m_{\phi}(v_i)=1$ for all $i$. This situation can be achieved
by grafting an additional interval on some leaf of $T$ and extending the
morphism as discussed in the section on modifications.

We now construct the gluing relation $\sim$ on the disjoint union
$T_1 \sqcup \cdots \sqcup T_d$ of $T$, by determining at points $w \in T$ the local relation
$\sim_w$, or equivalently, the partition $\Pi_w$ of $[d]$ into the equivalence classes of $\sim_w$.  

We will do this by walking along $T$ and starting at $w_0$. 
For the gluing relation $\sim_{w_0}$ at the leaf $w_0\in T$, choose for $\Pi_{w_0}$
the partition $\Pi_1$ of $[d]$ into singletons.
Now we begin to move $w$ along $T$, starting at $w_0$ and tracing the pre-image $\{v_1,\ldots,v_d\}$ under $\phi$ along. 
Let $w'\in T$ be a point above which some of the $v_i$ converge. 
For all points $w$ on $T$ in between $w_0$ and $w'$, we keep the partition $\Pi_w=\Pi_1$. 
However, above $w'\in T$, we are forced to mimic the convergence in the gluing datum by making $\Pi_{w'}$
the corresponding coarsening of $\Pi_1$. 
Let $e_1,\ldots,e_l$ be the half-edges emanating from $w'$ in $T$, where $e_1$ is in the direction of $w_0$. 
Along the remaining half-edges $e_2,\ldots,e_l$ emanating from $w'$ (so all except for $e_1$), we still
have to choose the partitions $\Pi_2,\ldots,\Pi_l$ of $[d]$. They should
satisfy the following rules. First, each $\Pi_i$ should be a refinement
of $\Pi_{w'}$. Second, the number of parts of $\Pi_i$ into which the part
of $\Pi_{w'}$ corresponding to a $v \in \phi^{-1}(w')$ splits equals the
number of half-edges in $\Gamma$ emanating from $v$ that map to $e_i$. Third,
the cardinalities of these parts should equal the slopes of $\phi$ along
those half-edges. As $\phi$ is harmonic, these slopes add up to the same number
$m_\phi(v)$ for all $i$, so that such partitions $\Pi_2,\ldots,\Pi_l$
certainly exist. Now fix any partitions $\Pi_2,\ldots,\Pi_l$ that satisfy the above rules.
Analogously, we can continue moving the point $w\in T$ and define the partition $\Pi_w$ until we have done this for all $w\in T$.
The corresponding gluing datum $(T,d,\sim)$ gives rise to
the map $\phi$ and satisfies the inequality (4) since
$\phi$ satisfies the Riemann-Hurwitz conditions.
\end{proof}

One can make use of the ambiguity in the construction of the gluing datum
$(T,d,\sim)$ (in particular, in the particular choice of the partitions
$\Pi_2,\ldots,\Pi_l$) to impose convenient extra conditions on the gluing
datum, for instance as follows:

\begin{lm} \label{lm:Intervalgluing}
Any tropical morphism $\phi:\Gamma \to T$ arises from a gluing datum
$(T,d,\sim)$ in which for each $i \neq j$ in $[d]$ the set $\{w \in
T \mid i \sim_w j\}$ is homeomorphic to a disjoint union of closed
intervals.
\end{lm}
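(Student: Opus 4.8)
The plan is to start from the gluing datum produced in the previous proposition and to modify it so that, for every pair $i \neq j$, the ``gluing set'' $\{w \in T \mid i \sim_w j\}$ is a disjoint union of closed intervals rather than a more general closed subtree. Recall that, by construction, this set is a finite union of closed connected subsets of $T$, i.e.\ a finite union of closed subtrees; the only obstruction to each component being an interval is the presence of a branch point of $T$ at which three or more half-edges all lie in the gluing set. So the idea is: whenever such a branch point occurs, we \emph{split} one of the indices $i,j$ into two along the tree, trading a single index that glues along a branching locus for two indices each of which glues along an interval.

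Concretely, I would proceed component by component. Fix a pair $i \neq j$ and a connected component $Z$ of $\{w \mid i \sim_w j\}$; if $Z$ is already an interval we do nothing, so assume $Z$ is a subtree with at least one branch point $b$. Pick a leaf $w_0$ of $T$ (after a harmless modification, as in the previous proof, we may assume the fibre over $w_0$ consists of $d$ distinct $m_\phi=1$ leaves and that $i$ is glued to no other index near $w_0$), and orient $Z$ away from the side containing $w_0$, so that $Z$ has a ``root'' edge and the rest of $Z$ hangs below. At the first branch point $b$ of $Z$ reached from the root, there is one incoming half-edge (towards $w_0$) and at least two outgoing half-edges $f_1,\dots,f_r$ of $T$ lying in $Z$. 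The key move is to re-label: along, say, $f_2,\dots,f_r$ and everything of $Z$ below them, replace the role of $j$ by that of a fresh index — but since we are not allowed to change $d$, we instead re-assign which of the existing copies $T_1,\dots,T_d$ plays the role of ``$j$'' below $b$. Because the partitions $\Pi_u$ only record an abstract equivalence relation on $[d]$, and because the construction of $\sim$ in the previous proposition allowed a \emph{free} choice of the refining partitions $\Pi_2,\dots,\Pi_l$ at each branch point subject only to the numerical (slope and valency) constraints, we may permute the labels of the parts of $\Pi_u$ as $u$ passes $b$ without changing the resulting metric graph $\Gamma$ or the morphism $\phi$: this is exactly the ``ambiguity'' remarked on after the previous proposition. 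Carrying this relabelling consistently down each branch of $Z$, the component $Z$ of the gluing set for the pair $(i,j)$ breaks into the single interval from the root of $Z$ up to $b$ together with intervals along $f_1$, while the portions of $Z$ along $f_2,\dots,f_r$ become components of gluing sets for \emph{other} pairs $(i,j')$ — and by construction those new components are strictly ``smaller'' (they contain one fewer branch point, or lie strictly below $b$).

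I would then argue termination: define a complexity measure, e.g.\ the total number of (pair, branch-point-of-a-gluing-component) incidences summed over all pairs $i\neq j$; each relabelling move strictly decreases this finite quantity while keeping $\Gamma$, its metric, and $\phi$ unchanged, so after finitely many moves every component of every gluing set is an interval. Finally, one checks the four axioms of a gluing datum still hold: (1) is automatic since we never glue non-vertically; (2) is preserved because we only ever replace one closed subtree by finitely many closed intervals; (3) holds because $\Gamma$ is literally unchanged as a topological space; and (4), the Riemann--Hurwitz inequality, is preserved because the numbers $m(w,i)$ and $\sum_e k_e(w,i)$ at a point are determined by $\Gamma\to T$ near that point, which we have not altered — equivalently, the relabelled partitions satisfy the same slope/valency constraints by construction.

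The main obstacle I anticipate is book-keeping at the branch points: one must be sure that relabelling the parts of $\Pi_u$ along one outgoing half-edge of $b$ can be done \emph{compatibly} with whatever gluings among \emph{other} pairs of indices are already happening at $b$, i.e.\ that the permutation of labels we apply below $b$ is an actual permutation of $[d]$ respecting \emph{all} the local partitions, not just the two parts we are trying to separate. This should follow from the fact that the partitions downstream of $b$ were only constrained numerically and can be pre-composed with any permutation of $[d]$ that preserves $\Pi_b$, but making the inductive statement strong enough to push this through all branches of $Z$ simultaneously — and across the different components and different pairs — is where the real care is needed.
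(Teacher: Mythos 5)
Your reduction of the lemma to a purely local problem is the right one, and it is the same reduction the paper makes: a component of $\{w : i\sim_w j\}$ fails to be an interval exactly when some point $w$ has $i,j$ lying in a common part of the partitions $\Pi_h$ along three or more half-edges at $w$, and the only freedom available is to re-choose (equivalently, relabel by permutations preserving $\Pi_w$) the partitions $\Pi_h$ within their prescribed part-size types. But at that point your argument has a genuine gap: you assert that the offending branchings can always be removed one at a time by a label swap, and that a complexity count terminates, and neither claim is justified. First, a separating permutation need not exist at all: if a whole part $P$ of $\Pi_w$ with $|P|=2$ remains a single part of $\Pi_h$ along three half-edges, no relabelling inside $P$ can separate the pair. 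This configuration is only excluded by the Riemann--Hurwitz inequality ($\sum_h k_h - 2 \geq m(l-2)$), which your argument never invokes except to note it is \emph{preserved}; since the statement is false without it, any correct proof must use it to show the good choices \emph{exist}. Second, your potential function can increase: separating $i$ from $j$ in $\Pi_h$ necessarily merges $i$ with some $j'$ there, and since $i\sim_w j'$ already holds at $w$ (they lie in the same part of $\Pi_w$), the pair $(i,j')$ may now be together along three half-edges at the very same point $w$, creating a new branch point at $w$ rather than pushing the problem strictly downward. So the new components do not, as you claim, "lie strictly below $b$."

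The paper avoids both problems by proving a self-contained combinatorial statement (its Lemma 4, the "triple intersection property"): given part sizes $\pi_1,\dots,\pi_l$ of $m$ with $k_h$ parts satisfying $k_1+\cdots+k_l-2\geq m(l-2)$, there exist set partitions $\Pi_1,\dots,\Pi_l$ of $[m]$ with those part sizes such that no two elements lie together in three of them. This is proved by a global induction on $m$ (peeling off one element and carefully adjusting the part sizes), not by local swaps, and the Riemann--Hurwitz inequality enters essentially in that induction. Your outline would become a proof only if you supplied an argument of comparable strength for the local choice at each point of $T$; the relabelling/termination scheme as written does not provide one.
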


The closed intervals in the lemma are allowed to consist of a single
point.
  
\begin{re}
Originally, we believed that the condition that any two trees are
glued along intervals also {\em implies} the Riemann-Hurwitz conditions. However,
as pointed out by Alejandro Vargas, this is incorrect. The
following shows a piece of a gluing datum that satisfies the
condition of the lemma but does not satisfy the
Riemann-Hurwitz condition at the central point:
\begin{center}
\includegraphics{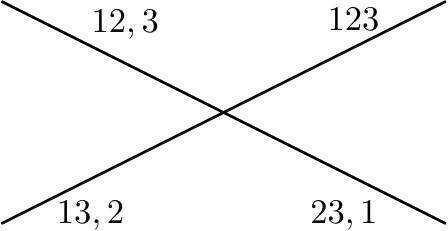}
\end{center}
Here $12,3$ is short-hand for the partition $\{\{1,2\},\{3\}\}$, etc. So
the valency of the metric graph is $k=7$ at this vertex, that of the
tree is $l=4$, $m_\phi$ equals $3$ but $k-2 \not \geq 3 \cdot (l-2)$.
\end{re}

\begin{proof}[Proof of Lemma~\ref{lm:Intervalgluing}]
The condition in the lemma translates to the following combinatorial
condition on the partitions $\Pi_1,\ldots,\Pi_l$ along the half-edges
emanating from a point $w \in T$: if $i,j$ are distinct elements
in the same part of some partition $\Pi_h$, then there is at most
one $h' \neq h$ such that $i,j$ are in the same part of $\Pi_{h'}$.
Lemma~\ref{lm:Partition} below shows that, when $\Pi_1$ is prescribed,
the Riemann-Hurwitz condition implies that $\Pi_2,\ldots,\Pi_l$ can be
chosen to ensure this. 
\end{proof}

\begin{lm} \label{lm:Partition}
Let $l,m,k_1,\ldots,k_l$ be positive integers and assume that
$k_1+\cdots+k_l-2 \geq m(l-2)$. For each $h=1,\ldots,l$ let $\pi_h$
be a partition of the {\em number} $m$ with $k_h$ (nonzero)
parts; in particular, each $k_h$ is at most $m$.

Then there exist partitions $\Pi_1,\ldots,\Pi_l$ of the {\em set} $[m]$
such that $\pi_h$ records the sizes of the parts in $\Pi_h$ and such
that the coarsest common refinement of any three of the $\Pi_h$ is the
partition into singletons.
\end{lm}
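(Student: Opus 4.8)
The plan is to induct on $m$, after two reductions: the statement is vacuous when $l \le 2$ (there is no triple), and a partition $\pi_h$ with $k_h = m$ may be deleted, since it forces $\Pi_h$ to be the discrete partition, whose meet with anything is again discrete — and the hypothesis for the remaining $l-1$ partitions follows at once. So assume $l \ge 3$, $m \ge 2$, and that every $\pi_h$ has a part $\ge 2$; writing $E_h := m - k_h \ge 1$, the hypothesis reads $\sum_h E_h \le 2(m-1)$.

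For the inductive step I would first fix, for each $h$, the part of $\pi_h$ into which a chosen element $x \in [m]$ will ultimately be placed: in at least two of the partitions this part is chosen of size $\ge 2$ (possible since every $\pi_h$ has one), and elsewhere a part of size $1$ is used whenever available. Deleting $x$ replaces $\pi_h$ by a partition $\pi_h'$ of $m-1$ with $k_h'$ parts, where $\sum_h k_h' = \sum_h k_h - s$ and $s \le l-2$ is the number of partitions in which $x$ was placed in a singleton; hence $\sum_h k_h' - 2 \ge m(l-2) - (l-2) = (m-1)(l-2)$, and the inductive hypothesis produces set-partitions $\Pi_1', \dots, \Pi_l'$ of $[m]\setminus\{x\}$ realizing the $\pi_h'$ with all triple meets discrete. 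I then build $\Pi_h$ by inserting $x$ into a block $B_h'$ of $\Pi_h'$ of the prescribed size (a new singleton $\{x\}$ when the chosen part had size $1$). Since the restriction to $[m]\setminus\{x\}$ of any triple meet of the $\Pi_h$ is a triple meet of the $\Pi_h'$, hence discrete, the only thing to check is that no $y \ne x$ lies in $B_h'$ for three different $h$.

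The key observation is that a partition of $m$ with no part equal to $1$ has at most $\lfloor m/2\rfloor$ parts, hence $E_h \ge \lceil m/2\rceil$, so $\sum_h E_h \le 2(m-1)$ allows at most three such partitions. Consequently $x$ is inserted into a non-singleton block in at most three of the $\Pi_h$ — the \emph{active} partitions — and the condition to verify reduces to: the (at most three) blocks $B_h'$ of the active $\Pi_h'$ have empty common intersection. With at most two active partitions this holds trivially. With exactly three, say $\Pi_a',\Pi_b',\Pi_c'$, the inductive hypothesis already gives $|B_a'\cap B_b'\cap B_c'|\le 1$, so it suffices to replace one of the $B_h'$ by another block of the same size missing the at most one offending element; this works unless each of $\Pi_a',\Pi_b',\Pi_c'$ has a \emph{unique} block of the prescribed size and those three blocks share a point.

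I expect this last configuration to be the main obstacle. It can arise only when there are exactly three partitions with no part equal to $1$, and $\sum_h E_h \le 2(m-1)$ then pushes $\pi_a,\pi_b,\pi_c$ very close to $(2,2,\dots,2)$ while leaving almost no budget for the others, so these profiles can be pinned down to a short list and dealt with by hand (for instance, in the extremal case $l=3$, $\pi_1=\pi_2=\pi_3=(2,\dots,2)$ with $m$ even, three distinct $1$-factors of $K_m$ do the job). In fact, for $l=3$ the whole lemma is equivalent to constructing a $k_1\times k_2\times k_3$ array of $0$s and $1$s whose three sets of line sums are $\pi_1,\pi_2,\pi_3$, and I would try to prove that directly under the hypothesis $k_1+k_2+k_3\ge m+2$ and then feed it back into the induction via a mild strengthening of the inductive hypothesis allowing one to forbid a prescribed element from the block $B_h'$. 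The bookkeeping — that each $\pi_h'$ is a genuine partition with $k_h'\ge 1$ parts, and that the insertions really realize the $\pi_h$ — I would leave to the write-up.
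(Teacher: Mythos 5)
Your overall strategy---induction on $m$, peeling off one element $x$, and using $\sum_h (m-k_h)\le 2(m-1)$ to bound by three the number of partitions in which $x$ must land in a non-singleton block---is essentially the paper's, and the reductions and inequality bookkeeping are correct. But there is a genuine gap exactly where you flag it: the case of three active partitions $\pi_a,\pi_b,\pi_c$ in which each $\Pi_h'$ has a \emph{unique} block of the prescribed size and those three blocks share an element. Your repair by ``swapping to another block of the same size'' is unavailable there by hypothesis, and neither fallback you sketch is established. The ``short list'' is not short: already for $l=3$ the slack $2(m-1)-3\lceil m/2\rceil$ grows linearly in $m$, so the three singleton-free shapes are nowhere near forced to $(2,\dots,2)$. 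And the proposed ``mild strengthening of the inductive hypothesis'' (forbidding a prescribed element from a prescribed block) changes the statement being proved and would have to be verified in every branch of the induction; the $0/1$-array existence statement for $l=3$ is likewise asserted, not proved. As written, the induction does not close.

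The paper closes precisely this hole with one device you are missing: the third active partition is not handled by inserting $m$ into an existing block of $\Pi_3'$. After ordering $k_1\le\cdots\le k_l$ (so that $\pi_1,\pi_2$ have parts $a_1,a_2>1$, $\pi_3$ has at least two parts $a_3,b_3$, and every $\pi_h$ with $h\ge 4$ has a singleton because $k_4>m/2$), one \emph{merges} $a_3$ and $b_3$ into a single part $a_3+b_3-1$ of $\pi_3'$, applies the induction, and then \emph{splits} the corresponding block $A_3'$ of $\Pi_3'$ into $B_3$ of size $b_3$ and $A_3=(A_3'\setminus B_3)\cup\{m\}$ of size $a_3$. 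The freedom in choosing $B_3$ lets one push the at most one element of $A_1'\cap A_2'\cap A_3'$ into $B_3$, killing the offending triple intersection with no strengthening of the inductive statement. If you replace simple insertion by this merge-and-split step for the third active partition, your argument goes through.
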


We dub this property of a sequence of partitions the {\em triple
intersection property}.

\begin{proof}
We proceed by induction on $m$. The statement is trivially true for $m=1$:
the only choice for each $\Pi_h$ is the partition of $[1]$ into the one
singleton $\{1\}$, and this choice satisfies the triple intersection
property. Now let $m \geq 2$, assume that the statement is true for
$m-1$, and consider $k_1,\ldots,k_l$ and partitions $\pi_h$ of $m$
as in the lemma. Order the $k_h$ such that $k_1 \leq \ldots \leq k_l$.

If $k_2=m$, then the only choices for $\Pi_2,\ldots,\Pi_l$ are the
partitions into singletons, and any choice for $\Pi_1$ will
do. Hence we may assume that $k_1,k_2 < m$, so that $\pi_1,\pi_2$ contain
parts $a_1,a_2>1$, respectively. Next, we have $k_3 > 1$ because otherwise
\[ \sum_h k_h - 2 \leq 3 \cdot 1 + (l-3) \cdot m - 2 < (l-2)m, \] 
where we use that $m>1$. Hence $\pi_3$ contains at least two parts
$a_3,b_3 > 0$. Similarly, we find that  $k_4 > m/2$, hence each $\pi_h$
with $h \geq 4$ contains at least one $1$. 

Now construct $\pi_h'$ from $\pi_h$ as follows: for $h=1,2$ reduce
the part $a_h$ by $1$; for $h=3$ replace the two parts $a_3,b_3$ by a
single part $a_3+b_3-1$; and for $h \geq 4$ discard a part equal to $1$.
This yields $l$ partitions of $m-1$, and in the inequality of the lemma
both sides have been reduced by $l-2$. By the induction hypothesis,
there are partitions $\Pi_h',\ h=1,\ldots,l$ of $[m-1]$ corresponding
to $\pi_{h}'$ that satisfy the triple intersection property.

From the $\Pi_h'$ we construct partitions $\Pi_h$ of $[m]$ as follows. For
$h=1,2$ let $A_h'$ be the (or a) part of $\Pi_h'$ of cardinality $a_h-1$;
add $m$ to this set to obtain $A_h$. For $h>3$ add the singleton $\{m\}$
to the partition $\Pi_h'$. Finally, for $h=3$ let $A_3'$ be the part of
$\Pi_3'$ of size $a_3+b_3-1$. We want to replace $A_3'$ with two sets
$B_3$ and $A_3:=(A_3' \setminus B_3) \cup \{m\}$ where $B_3 \subseteq
A_3'$ has cardinality $b_3$. The only triple intersection that might
now get cardinality $2$ is the one between $A_1,A_2,A_3$ (which contains
$m$), but this happens only if we put an element of $A_1' \cap A_2' \cap
A_3'$ in $A_3$.  Since this intersection contains at most one element,
we can avoid this by putting that element, if it exists, into $B_3$
(whose prescribed cardinality $b_3$ is positive).
\end{proof}

\begin{re}
We think that a generalisation of Lemma~\ref{lm:Partition} might hold,
where one replaces $2$ by an $n \in \{0,\ldots,l-1\}$, the inequality by 
$k_1+\cdots+k_l-n \geq m(l-n)$, and the triple
intersection property by the property that the coarsest common refinement
of any $n+1$ of the partitions be the partition into singletons. But since
we do not need this for our current purposes, we have not pursued this.
\end{re}

\begin{re}
Lemma~\ref{lm:Intervalgluing} will not be used in the remainder of the
paper. It may, however, prove useful when actually generating
many points in the gonality-$d$ locus of $\Mg$.
\end{re}

Now that we know that gluing datums give rise to tropical morphisms and
vice versa, we can express the genus of a metric graph $\Gamma$ in terms
of a gluing datum as follows.

\begin{prop} \label{prop:genus}
Let $(T,d,\sim)$ be a gluing datum. For each subset $I \subseteq [d]$
define
\[ T_I:=\{w \in T \mid \forall i,j \in I: i \sim_w j\} \subseteq T. \]
Then the genus of the metric graph $\Gamma$ determined by the datum equals
\[ g(\Gamma)=\sum_{I \subseteq [d]} (-1)^{|I|} c(T_I), \]
where $c(T_I)$ is the number of connected components of $T_I$.
\end{prop}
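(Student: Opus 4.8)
The plan is to compute the genus of $\Gamma$ — that is, the rank of its first homology group, equivalently $|E| - |V| + 1$ for any model — by realizing $\Gamma$ as a quotient of the disjoint union $S = T_1 \sqcup \cdots \sqcup T_d$ and tracking how the identifications change the Euler characteristic. Since each $T_i$ is a tree (contractible), $S$ has Euler characteristic $\chi(S) = d$. The metric graph $\Gamma = S/\!\!\sim$ is obtained by gluing, and by hypothesis (3) it is connected, so $g(\Gamma) = 1 - \chi(\Gamma)$. Thus the whole problem reduces to computing $\chi(\Gamma)$ in terms of the gluing data, and the claimed formula is exactly an inclusion–exclusion expression for this.

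First I would set up a finite CW (or simplicial) model compatible with the gluing. Using property (2), the set of points of $T$ where the partition $\sim_w$ is non-discrete, together with the branch points of $T$ and the finitely many points where any $\sim_w$ changes, is finite; subdividing $T$ at all these points and pulling back to each $T_i$ gives finite graph structures on $S$ and on $\Gamma$ in which $\phi$ is cellular and the gluing identifies cells only with cells ``vertically above or below'' by property (1). Now I would compute $\chi(\Gamma)$ cell by cell. For an open cell $\sigma$ of $T$ (a vertex or an edge), its preimages in the $d$ copies are $d$ cells, which get identified into $m(\sigma)$ cells, where $m(\sigma)$ is the number of $\sim$-classes among $\{1,\dots,d\}$ along $\sigma$ (constant on $\sigma$ by our subdivision). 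So the contribution of $\sigma$ to $\chi(\Gamma)$ is $\pm m(\sigma)$, with sign $+$ for vertices and $-$ for edges.

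Next comes the combinatorial identity converting $\sum_\sigma (\pm) m(\sigma)$ into $\sum_{I \subseteq [d]} (-1)^{|I|} c(T_I)$. The key observation is a pointwise inclusion–exclusion: for a fixed $w \in T$, if the partition $\sim_w$ has classes of sizes $n_1, \dots, n_{m(w)}$, then
\[
\sum_{I \subseteq [d]} (-1)^{|I|} [\,\text{all of } I \text{ in one } \sim_w\text{-class}\,] = \sum_{k=1}^{m(w)} \sum_{J \subseteq (\text{class } k)} (-1)^{|J|} + (\text{correction for } I=\emptyset).
\]
Since $\sum_{J \subseteq C}(-1)^{|J|} = 0$ for any nonempty set $C$ and $= 1$ for $C = \emptyset$, each class contributes $0$ except we must be careful: the sum over $I$ with ``all of $I$ in one class'' overcounts $I = \emptyset$ exactly $m(w)$ times (once per class) instead of once, so $\sum_{I}(-1)^{|I|}[\,I \text{ monochromatic at } w\,]$, where by convention $\emptyset$ is counted once, equals $1 - m(w)$; hence $m(w) = 1 - \sum_{I}(-1)^{|I|}[\,I \subseteq \text{one class at } w\,]$, i.e. $m(w) - 1 = -\sum_{|I|\ge 1}(-1)^{|I|}[\,w \in T_I\,]$ wait — cleanest is simply $m(w) = -\sum_{I \subseteq [d]}(-1)^{|I|}[\,w \in T_I\,]$ once we note $T_\emptyset = T_{\{i\}} = T$ for all $i$ and the empty/singleton terms contribute $1 - d$, reconciling constants. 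I would then substitute this pointwise identity into the cellular computation: $\chi(\Gamma) = \sum_{\text{vertices } v \text{ of } T} m(v) - \sum_{\text{edges } \epsilon} m(\epsilon)$, swap the order of summation over $I$, and recognize that for each fixed $I$ the alternating sum $\sum_v [v \in T_I] - \sum_\epsilon [\epsilon \subseteq T_I]$ is exactly $\chi(T_I) = c(T_I)$, since $T_I$ is a closed subgraph of a tree and hence a disjoint union of trees. Combining with $g(\Gamma) = 1 - \chi(\Gamma)$ and absorbing the constant $1$ into the $I = \emptyset$ term ($c(T_\emptyset) = c(T) = 1$) yields the stated formula.

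The main obstacle, I expect, is bookkeeping rather than depth: making sure the finite model is fine enough that $m(\sigma)$ is genuinely constant on each cell (this is where property (2), its closedness, and the semicontinuity of $w \mapsto \sim_w$ are used), and correctly handling the degenerate terms $I = \emptyset$ and $|I| = 1$ so that the constant $1$ in $g = 1 - \chi$ and the $d$ copies' contractibility land in the right place in the inclusion–exclusion. One should also verify that the metric rescaling used to define $\Gamma$ (dividing by $d_v$ near $v$) is irrelevant here, since genus is a homeomorphism invariant and the formula only sees the combinatorial structure. I would present the homology/Euler-characteristic computation first, isolate the pointwise identity $m(w) + \sum_{|I|\ge 2}(-1)^{|I|}[w \in T_I] = d - (d-1) \cdot (\text{something})$ — better, prove directly $1 - \sum_{I \subseteq [d]}(-1)^{|I|}[\text{$I$ monochromatic at $w$}] = m(w)$ with the convention that $\emptyset$ counts once — and then feed it through the cell count, leaving the final simplification as a one-line swap of summation order.
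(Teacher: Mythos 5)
Your proof is correct and follows essentially the same route as the paper: both compute $g(\Gamma)=1-\chi(\Gamma)$, view $\Gamma$ as the union of the images of the $d$ copies of $T$, apply inclusion--exclusion for the Euler characteristic, and use that the Euler characteristic of the forest $T_I$ equals $c(T_I)$ (the paper simply cites the inclusion--exclusion formula for unions of subcomplexes, whereas you reprove it cell by cell via the pointwise identity $m(w)=1-\sum_{I}(-1)^{|I|}[w\in T_I]$). Just make sure to use that corrected form of the identity --- your first attempt $m(w)=-\sum_I(-1)^{|I|}[w\in T_I]$ is off by one, as you noticed yourself.
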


In this proposition, $T_I$ is the image in $T$ of the intersection of
the images in $\Gamma$ of the $T_i$ with $i \in I$, and a closed forest
in $T$. Note that $T_I$ is equal to $T$ for $|I|<2$.


\begin{proof}
The genus of $\Gamma$ equals $1$ minus its Euler characteristic. Since the
Euler characteristic of a forest is its number of connected components,
the formula follows immediately from the usual inclusion-exclusion formula
for the Euler characteristic of simplicial complexes---the additional term
$1$ corresponds to the term with $I=\emptyset$ on the right-hand side.
\end{proof}

\section{Upper bounds on the dimension of the locus of bounded gonality}
\label{sec:Upper}

Our goal in this section is to derive the upper bound from
Theorem~\ref{thm:dimension} on the dimension of the locus in $\Mg$ where
the gonality is equal to $d$. For a gluing datum $(T,d,\sim)$ recall
that $w \in T$ is called an endpoint if it is a leaf of some connected
component of $T_{\{i,j\}}$ for some distinct $i,j \in [d]$; and we write
$E$ for the set of endpoints that have valency at most $2$ in $T$. We
also introduce the following notation: if $\Gamma$ is the corresponding
metric graph and $\phi:\Gamma \to T$ the tropical morphism defined by
the datum and $v \in \Gamma$ a point with valency $k$ where $\phi(v)$
has valency $l$ in $T$, then we set $r_\phi(v):=(k-2)-m_\phi(v)(l-2)$. By
the Riemann-Hurwitz condition, this is a nonnegative number. Note that
$r_\phi(v)$ is positive only at a finite number of points.

\begin{prop} \label{prop:ineq}
Let $(T,d,\sim)$ be a gluing datum, $\phi:\Gamma \to T$ the corresponding
tropical morphism, and $g$ the genus of $\Gamma$. Then we have 
\begin{equation} \label{eq:ineq}
|E|+\sum_{v \in \Gamma: \phi(v) \text{ has valency } > 2} r_\phi(v) \leq 2g+2d-2. 
\end{equation}
\end{prop}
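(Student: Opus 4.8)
The plan is to express both sides of the inequality in terms of the data living along the tree $T$, using Proposition~\ref{prop:genus} for the genus and a local Riemann--Hurwitz accounting for the sum of $r_\phi(v)$, and then to play these two computations off against a global Euler-characteristic argument for $T$ itself. Concretely, I would first pass (via Lemma~\ref{lm:Intervalgluing} if convenient, though it may not be necessary) to a gluing datum in which the loci $T_{\{i,j\}}$ are finite disjoint unions of closed intervals, so that $c(T_{\{i,j\}})$ literally counts these intervals and each interval contributes two leaves (possibly coinciding) to the endpoint set $E$. Then $|E|$ is controlled by $\sum_{i<j} 2\,c(T_{\{i,j\}})$, up to the correction for endpoints that happen to have valency $>2$ in $T$ (those are excluded from $E$ by definition) and for coincidences of leaves of different intervals at the same point of $T$.

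Next I would rewrite the genus formula. By Proposition~\ref{prop:genus}, $g=\sum_{I}(-1)^{|I|}c(T_I)$, and since $T_I=T$ for $|I|\le 1$ this is $1-d+\binom{d}{2}$-many terms minus higher ones; the key point is that $\sum_{i<j}(c(T_{\{i,j\}})-1)$ is a lower bound for $g$ plus the contributions of the higher-order terms, i.e. roughly $g \ge \sum_{i<j}(c(T_{\{i,j\}})-1) - (\text{correction from }|I|\ge 3)$. Rather than fighting the inclusion--exclusion directly, a cleaner route is to compute the genus as $1-\chi(\Gamma)$ using the explicit description of $\Gamma$ as $S/\!\!\sim$: the $d$ copies of $T$ each contribute Euler characteristic $1$, and each identification of connected chunks of two trees along an interval reduces $\chi$ by $1$ per interval (collapsing that interval, which is contractible, with its image). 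This gives $\chi(\Gamma) = d - \sum_{i<j} c(T_{\{i,j\}}) + (\text{higher corrections} \ge 0)$ hence $g = 1 - d + \sum_{i<j} c(T_{\{i,j\}}) - (\text{higher}) $; so $\sum_{i<j}c(T_{\{i,j\}}) \le g + d - 1 + (\text{higher})$, but with the sign of the higher terms it is more convenient to bound in the other direction, which is exactly what Proposition~\ref{prop:genus}'s alternating sum is designed to give. I would extract from it the inequality $\sum_{i<j} (c(T_{\{i,j\}})-1) \le g + \binom{d-1}{2}$-type bound — more honestly, I expect to use that the $|I|\ge 3$ terms, being an alternating sum of nonnegative connected-component counts, contribute something bounded in terms of the pairwise data, and this bookkeeping is where the constant $2g+2d-2$ (rather than something worse) has to emerge.

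For the $r_\phi$ term, I would localize: for each point $w\in T$ of valency $l(w)>2$ and each $\sim_w$-class, $r_\phi$ at the corresponding vertex $v$ of $\Gamma$ is $(\sum_e k_e(w,i)) - 2 - m(w,i)(l(w)-2)$, and summing over all $w$ and all classes telescopes along each half-edge of $T$: the quantity $\sum_e k_e(w,i)$ over outgoing half-edges, summed over the tree, is essentially twice the number of "splitting events" of the partition as one walks along $T$, and each splitting event that creates a new branch of some $T_{\{i,j\}}$ is accounted for in $c(T_{\{i,j\}})$. The cleanest formulation is to walk along $T$ from a leaf and track, for each pair $i<j$, when $i\sim_w j$ turns on and off; each "off" interval boundary is an endpoint, and $r_\phi$ collects precisely the discrepancies forced by harmonicity at branch points of $T$. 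Assembling: $|E| + \sum_v r_\phi(v)$ becomes a sum over $T$ of local contributions each of which is bounded, pair by pair, by (twice) the component count $c(T_{\{i,j\}})$, plus boundary terms at the leaves of $T$; and then the genus bound converts $\sum_{i<j} 2c(T_{\{i,j\}})$ into $2g + 2d - 2$ (the "$-2$" coming from the two leaves of $T$ we may assume in the normalization, matching the $|I|\le 1$ part of the alternating sum).

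The main obstacle I anticipate is the inclusion--exclusion bookkeeping for $|I|\ge 3$: the naive pairwise count $\sum_{i<j}c(T_{\{i,j\}})$ overcounts the genus because a point where three or more copies are glued is shared among several pairs, and Proposition~\ref{prop:genus} corrects for this with an alternating sum whose terms do not have a uniform sign after partial summation. I would handle this by not trying to bound $|E|$ and $\sum r_\phi$ separately by pairwise counts, but rather by a single combined walk along $T$: at each point $w$ the combined local contribution to $|E| + \sum r_\phi$ is exactly $2(l(w)-1)$ times something controlled, and the total is $-\chi$ of the relevant forest, so the alternating structure of Proposition~\ref{prop:genus} is used once, globally, rather than term by term — sidestepping the sign issue entirely and yielding $2g+2d-2$ directly.
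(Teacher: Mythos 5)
There is a genuine gap here: what you have written is a plan whose two load-bearing steps are not carried out, and one of them fails as stated. First, the bound $|E|\lesssim \sum_{i<j}2\,c(T_{\{i,j\}})$ is false for a general gluing datum: $T_{\{i,j\}}$ is only a closed \emph{forest} in $T$, so a single connected component can be a branching tree with three or more leaves, each of which may have valency $\le 2$ in $T$ and hence lie in $E$. You cannot repair this by invoking Lemma~\ref{lm:Intervalgluing}, because that lemma replaces the gluing datum by a different one, which changes the set $E$ --- and the proposition must hold for the datum you are given (it is applied in Section~\ref{sec:Upper} to a specific datum whose endpoints are being counted). Second, you correctly identify the alternating-sign problem in the inclusion--exclusion of Proposition~\ref{prop:genus} as the central difficulty, but the proposed resolution (``a single combined walk along $T$'' in which the local contribution is ``exactly $2(l(w)-1)$ times something controlled'') is an aspiration, not an argument; no identity or inequality is actually established at a point of $T$ where three or more sheets are glued, and the origin of the constant $-2$ (``the two leaves of $T$ we may assume'') is a guess.

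The paper avoids both problems by a different decomposition: it inducts on the number of sheets, ordering the copies so that $T_e$ is glued to the connected graph $\Gamma_{e-1}$ built from $T_1,\dots,T_{e-1}$, along a closed set $C$. Adding $T_e$ raises the right-hand side by $2c(C)$ (genus goes up by $c(C)-1$, degree by $1$), and each connected component $C'$ of $C$ --- which is a tree, possibly a point --- raises the left-hand side by at most
\[ \#\{\text{leaves of }C'\}+\sum_{v\text{ non-leaf in }C'}(2-k'_v)=2, \]
after checking that a leaf of $C'$ contributes at most $1$ (to $|E|$ or to $r_\phi$, depending on the valency of its image in $T$), that a non-leaf of $C'$ changes $r_\phi$ by $2-k'_v$, and --- a subtlety your sketch does not touch --- that a non-leaf of $C'$ of valency $\le 2$ in $T$ cannot be a \emph{new} endpoint, since it was already a leaf of some $T_{\{i,j\}}$ with $i,j<e$. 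Gluing one whole sheet at a time to the accumulated graph, rather than bookkeeping all pairs $\{i,j\}$ simultaneously, is precisely what sidesteps the higher-order inclusion--exclusion terms you were struggling with. If you want to pursue your pairwise route, you would need to prove a genuine local inequality at each branch point of $T$ relating the number of parts of the partitions $\Pi_h$ to the Riemann--Hurwitz defect; as it stands, the proposal does not constitute a proof.
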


\begin{proof}
Since $\Gamma$ is connected by assumption, we may order the copies of
$T$ such that each $T_i$ with $i>1$ is glued to at least one $T_j$ with
$j<i$. For each $e \in [d]$, the restriction of $\sim$ to
$T_1 \sqcup \ldots \sqcup T_e$ then yields a gluing datum $(T,e,\sim)$ of a connected
metric graph $\Gamma_e$, which is obtained from $\Gamma_{e-1}$ by suitably
gluing the copy $T_e$ to it.

We argue by induction on $e$. For $e=1$, the statement is true:
$E=\emptyset$, $r_\phi(v)=0$ for all $v$, and $0+0=2 \cdot 0+2\cdot 1-2$. For
the induction step, assume that the formula holds for $(T,e-1,\sim)$, and consider the graph $\Gamma_e$. 
Let $C\subset \Gamma$ be the set along which $T_e$ is glued to
$\Gamma_{e-1}$. The increase in $g$ when passing from $\Gamma_{e-1}$
to $\Gamma_e$ equals $c(C)-1$, and the increase in $d$ is $1$. Hence
the right-hand side in~\eqref{eq:ineq} increases with $2c(C)$.

On the other hand, consider a connected component $C'$ of $C$ and let
$v \in C'$. Denote by $l$ the valency of $\phi(v)$ in $T$ and by $k_v'$ the
valency of $v$ in the closed tree $C'\subset \Gamma$. Then $m_\phi(v)$ increases by $1$
in passing from $\Gamma_{e-1}$ to $\Gamma_e$, while the valency $k$ of
$v$ in these graphs increases by $l-k_v'$. This means that $r_\phi(v)$
increases by $l-k_v'-(l-2)=2-k_v'$ (so a decrease if $k_v'>2$).

If $C'$ consists of the single point $v$ only, then $v$ either contributes
to an increase of $|E|$ by at most $1$ (if $l\leq 2$)
or to an increase by $2-k_v'=2$ of the second summand in the 
left-hand side of~\eqref{eq:ineq} (if $l>2$). In either case, $C'$
contributes an increase of at most $2$ to the left-hand side.

Now suppose that $C'$ is not a single point. Then each leaf $v$ of $C'$
has $k'_v=1$ and contributes at most $1$ to the left-hand side: to $|E|$
if $l\leq 2$ or to the other summand if $l>2$. A $v \in C'$
which is not a leaf cannot be a new point in $E$. Indeed,
if it is a leaf of a connected
component of $T_{\{i,e\}}$ with $i \neq e$ and also has valency at
most two in $T$, then it has valency exactly two, and any half-edge
emanating from $v$ not contained in $T_{\{i,e\}}$ (of which
there are one or two) must be contained
in some $T_{\{j,e\}}$ with $j \neq i,e$. But then $v$ is a leaf of the
component of $T_{\{i,j\}}$ containing $v$, hence already in $E$ and not
a new endpoint.  Hence $C'$ contributes an increase of the left-hand
side in~\eqref{eq:ineq} by at most
\[ \#\{\text{leaves of $C'$}\} + \sum_{v \text{
non-leaf in } C'} (2-k'_v). \]
Straightforward combinatorics shows that this quantity equals $2$ for
every tree that is not a single point. 

Since each component of $C$ contributes at most $2$, the left-hand side
of \eqref{eq:ineq} increases by at most $2c(C)$ in passing from
$\Gamma_{e-1}$ to $\Gamma_e$, and combining this with the first paragraph
of the proof we find that the inequality is preserved.
\end{proof}

\begin{cor}
In the setting of Proposition~\ref{prop:ineq} we have $|E| \leq 2g+2d-2.$
\end{cor}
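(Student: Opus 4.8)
The plan is to obtain this bound as an immediate weakening of Proposition~\ref{prop:ineq}. That proposition already delivers the inequality
\[ |E|+\sum_{v \in \Gamma: \phi(v) \text{ has valency } > 2} r_\phi(v) \leq 2g+2d-2, \]
so the only thing left to observe is that the sum on the left is a sum of nonnegative terms and may therefore be discarded without affecting the direction of the inequality. Concretely, each summand is $r_\phi(v)=(k-2)-m_\phi(v)(l-2)$, where $k$ is the valency of $v$ in $\Gamma$ and $l$ that of $\phi(v)$ in $T$; this is exactly the slack in the Riemann-Hurwitz condition at $v$, and it is nonnegative precisely because $\phi$, being the tropical morphism attached to the gluing datum, satisfies the Riemann-Hurwitz condition everywhere (this was recorded in the paragraph preceding Proposition~\ref{prop:ineq}).

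So the argument I would write is one line: since every $r_\phi(v)\geq 0$, we have
\[ |E| \;\leq\; |E|+\sum_{v \in \Gamma: \phi(v) \text{ has valency } > 2} r_\phi(v) \;\leq\; 2g+2d-2, \]
where the second inequality is Proposition~\ref{prop:ineq}. No case analysis, no new combinatorics, and no hypotheses beyond those of Proposition~\ref{prop:ineq} are needed.

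There is, honestly, no genuine obstacle here: all the work — the induction on the copies of $T$, the leaf-counting on each glued component, the bookkeeping that distinguishes the contribution to $|E|$ from the contribution to the ramification sum — is already carried out in the proof of Proposition~\ref{prop:ineq}. The point of isolating this corollary is presumably organisational: it extracts the pure ``number of low-valency endpoints'' bound, uncoupled from the ramification data, which is the shape in which the estimate will be fed into the dimension count of Section~\ref{sec:Upper}. If anything merited a comment, it would only be the (trivial) remark that $E$ as defined counts endpoints of valency at most $2$ in $T$, while the discarded sum ranges over points lying over higher-valency points of $T$, so the two quantities on the left of \eqref{eq:ineq} are genuinely separate and each is individually bounded by $2g+2d-2$.
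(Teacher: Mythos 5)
Your proof is correct and is exactly the paper's argument: the corollary follows from Proposition~\ref{prop:ineq} by discarding the sum of the ramification terms $r_\phi(v)$, which are nonnegative by the Riemann--Hurwitz condition. Nothing further is needed.
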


\begin{proof}
This follows immediately from the Riemann-Hurwitz condition that
$r_\phi(v) \geq 0$ for each $v$.
\end{proof}

We can now prove that the dimension of the locus in $\Mg$ where the
gonality is at most $d$ is at most $2g+2d-5$.

\begin{proof}[Proof of Theorem~\ref{thm:dimension}, upper bound.]
Start by taking a gluing datum $(T,d,\sim)$ that gives rise to a
degree-$d$ tropical morphism $\phi:\Gamma \to T$, and assume that
$\Gamma$ has genus $g$. If $v$ is a leaf of $T$ such that $\sim_v$ is
the partition of $[d]$ into singletons, then $\phi^{-1}(v)$ consists
of $d$ valency-one points.  If the interval $e$ leading to $v$
contains points where the gluing is not trivial, then let $w \in e$
the point closest to $v$ with this property. Otherwise, let $w \in T$
be the point of valency greater than two where $e$ is attached to the
rest of $T$. By deleting the segment $(w,v]$ we obtain a new gluing
datum $(T',d,\sim)$ defining a metric graph $\Gamma'$ of which $\Gamma$
is a modification. Proceeding in this manner with deleting unnecessary
leaves, we arrive at a gluing datum, which we still denote $(T,d,\sim)$,
such that $\sim_v$ is non-trivial at every leaf $v \in T$. This means
that all leaves are endpoints. Next re-attach an interval of positive
length at each leaf $v$ where the gluing relation is constant and equal
to $\sim_v$. These intervals do not contribute to the edge lengths of
$\Gamma$ accounted for in the moduli space, and we introduce them merely
because they {\em do} contribute to the parameter count that follows.
We denote the resulting gluing datum again by $(T,d,\sim)$.

Now we count the cardinality $|V \cup E|$ where $V \subseteq T$ is the
set of points of valency greater than $2$ and $E$ is the set of endpoints
of valency at most $2$. By basic combinatorics, $|V|$ is at most $l-2$
where $l$ is the number of leaves, with equality if and only if all
points in $V$ are trivalent. Thus, by the corollary above, $|V \cup E|
\leq 2g+2d-4+l$.  Since all leaves of $T$ are end points, the set $V\cup
E$ can be seen as a vertex set of $T$.  Hence, the complement of $V
\cup E$ in $T$ consists of at most $2g+2d-5+l$ intervals whose lengths
determine the lengths in $\Gamma$. Among these intervals, $l$
intervals at the leaves do not contribute to the image of $\Gamma$ in
the moduli space (these intervals may be longer than the ones added in
the last modification, if the gluing relation was already constant on
a postive-length interval leading into a leaf $v$). Thus we arrive at
the correct dimension count $2g+2d-5$.

To describe the locus in
$\Mg$ where the gonality is at most $d$ one now proceeds as follows:
\begin{enumerate}
\item Enumerate all (finite, combinatorial) trees with at most
$2g+2d-5$ edges (whose vertices are allowed to have
any valency greater than or equal to $1$); there are finitely many of these. 
\item Equip each such tree with a gluing relation $\sim$
that is constant along the edges; again, there are finitely many
possibilities. 
\item Select those combinatorial choices of a tree $T$ plus a gluing that
lead to a connected graph $\Gamma$ of genus $g$ and a map $\Gamma \to T$
that satisfies the Riemann-Hurwitz conditions. (In practice, one might
want to exploit Lemma~\ref{lm:Intervalgluing} in the previous step to
rule out some of the possibilities.)
\item Writing $E$ for the edge set of $T$, we obtain a map $\Psi$ from
$\RR_{\geq 0}^E$ into $\bigsqcup_{g' \leq g}
\mathcal{M}_{g'}^{\text{trop}}$ by interpreting the entries of 
$\ell \in \RR_{\geq 0}^E$ as edge lengths on $T$---we take the union over all $g' \leq g$ since cycles get contracted if their total length is zero. The
set $\Psi(\Psi^{-1}(\Mg))$ is a closed cell in $\Mg$ of dimension at
most $2g+2d-5$. 
\end{enumerate}
The union of these finitely many cells is the locus in the theorem. 
\end{proof}

\section{Constructing graphs with prescribed gonality}
\label{sec:Lower}

In this section we construct a subset of $\Mg$ of dimension
$\min\{2g+2d-5,3g-3\}$ where the gonality is at most $d$, thus proving
the lower bound in Theorem~\ref{thm:dimension}. Consequently, we prove
that for $d=\lceil \frac{g}{2} \rceil+1$ this subset intersects each
cell in the definition of $\Mg$ in a non-empty open subset.  The main
construction is given in the following subsection.

\subsection*{Gluing in a tripod}
Let $(T,d,\sim)$ be a gluing datum with corresponding tropical
morphism $\phi:\Gamma \to T$. Pick points $u,v,w \in \Gamma$ with images
$u':=\phi(u),v':=\phi(v),w':=\phi(w) \in T$. These points span a tree in
$T$ with at most three leaves; let $y$ be the unique point in $T$ that
lies on all the shortest paths $u'v',u'w'$, and $v'w'$. Let $a,b,c$
be the lengths of the shortest paths $u'y,v'y,w'y$, respectively,
and pick additional lengths $a',b',c'>0$. Let $i,j,k \in [d]$ be such
that $u,v,w$ lie in $T_i,T_j,T_k$, respectively. Attach intervals of
lengths $a',b',c'$ to $T$ at $u',v',w'$, respectively, and call the resulting
tree $T'$. Let $u'',v'',w'' \in T'$ be the boundary points of those intervals.
Then one obtains a new gluing datum $(T',d+1,\sim')$ from $(T,d,\sim)$ by
gluing $T'_{d+1}$ only along the points $u'',v'',w''$ with $T_i,T_j,T_k$,
respectively, and leaving the gluing relation on the other copies of $T'$
unchanged. Let $\Gamma'$ be the corresponding metric graph. The
following is now straightforward. 

\begin{lm}
The graph $\Gamma'$ is a modification of the graph obtained from $\Gamma$
by adding a new trivalent vertex with new edges attached to $u,v,w$
of lengths $a+2a',b+2b',c+2c'$, respectively.
\end{lm}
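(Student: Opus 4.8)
The claim is a local statement about the gluing datum $(T',d+1,\sim')$, so the proof should proceed by unwinding the definitions at the three new gluing points $u'',v'',w''$ and at the point $y$, and then invoking the Proposition that a gluing datum produces a tropical morphism together with Proposition~\ref{prop:genus} (or, more simply, the modification machinery) to identify the underlying metric graph. First I would describe the metric graph $\Gamma'_0$ obtained from $\Gamma$ purely combinatorially: introduce one new vertex $z$, and three new edges $zu$, $zv$, $zw$ of lengths $a+2a'$, $b+2b'$, $c+2c'$. The assertion is that $\Gamma'$, the graph built from the datum $(T',d+1,\sim')$, is a modification of $\Gamma'_0$ — i.e. differs from it only by grafting trees onto points.

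**Key steps.** (1) Identify the image of the copy $T'_{d+1}$ inside $\Gamma'$. Since $T'_{d+1}$ is glued to the rest of $S'=T'_1\sqcup\cdots\sqcup T'_{d+1}$ only along the three isolated points $u'',v'',w''$, every point of $T'_{d+1}$ other than those three maps to a point of $\Gamma'$ lying on no other copy; hence, away from three points, the metric on the image of $T'_{d+1}$ is just the metric of $T'$ (the local number $d_v$ of glued sheets is $1$), and $\phi'$ has slope $1$ there. So the image of $T'_{d+1}$ is an isometric copy of $T'$ inside $\Gamma'$, attached to the image of $\Gamma$ exactly at the three points $u$, $v$, $w$. (2) Analyze $T'$ as a metric tree: it is the tree in $T$ spanned by $u',v',w'$ (a tripod with center $y$ and legs of lengths $a,b,c$) with the three extra intervals of lengths $a',b',c'$ appended at the ends. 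So inside $\Gamma'$ the image of $T'_{d+1}$ is a tree with one trivalent point — the image of $y$, call it $z$ — three paths from $z$ to $u$, $v$, $w$ of lengths $a+a'$, $b+a'$... wait, of lengths $a+a'$, $b+b'$, $c+c'$ respectively, plus whatever subtree of $T'$ hangs off of $y$ and the three legs that is not on the tripod through $u'',v'',w''$; all of that extra material is a disjoint union of trees dangling from the image, i.e. a modification. (3) Now the subtlety with the factor of $2$: the three legs of $T'$ from $y$ to $u''$, from $y$ to $v''$, from $y$ to $w''$ have lengths $a+a'$, $b+b'$, $c+c'$. In $\Gamma'$ the initial segments $u'u''$, $v'v''$, $w'w''$ of these legs (of lengths $a',b',c'$) are each glued to the corresponding freshly-grafted interval attached to $T$ at $u',v',w'$ — so along those segments $d_v=2$ and the metric of $\Gamma'$ is the $T'$-metric divided by $2$, making their true lengths in $\Gamma'$ equal to $a'/2$... no: re-read — the grafted intervals of lengths $a',b',c'$ are added to $T$, and $T'_{d+1}$ is glued to $T_i,T_j,T_k$ *only* at the boundary points $u'',v'',w''$, not along whole segments. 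So in fact $d_v=1$ everywhere on $T'_{d+1}$, and the identification with $\Gamma'_0$ reads off directly: the path from $z$ to $u$ has length $a$ (from $y$ to $u'$ in $T$) $+\,a'$ (from $u'$ to $u''$) $+\,a'$ (back down the grafted interval from $u''$, identified, to $u'$) — that is where the $2a'$ comes from, the grafted interval being traversed once on $T'_{d+1}$'s copy and the identified copy giving nothing extra... I will need to be careful here: the clean statement is that grafting an interval of length $a'$ at $u'$ and then re-gluing the far endpoint to $u''\in T'_{d+1}$ effectively creates a path of length $a+2a'$ from $z$ to $u$, since the round trip $u' \to u'' \to (\text{grafted interval}) \to u'$ is not what happens — rather $u''$ is glued to $u'$'s grafted endpoint, so the segment $u'u''$ on $T'_{d+1}$ and the grafted interval on $T$ are identified into a single interval of length $a'$, giving total $z$-to-$u$ length $a + a' + a'$? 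I would sort this out by drawing the picture explicitly and matching lengths.

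**Main obstacle.** The real content and the only place requiring care is precisely this length bookkeeping at the three legs — verifying that the grafted intervals of length $a',b',c'$, once their endpoints $u'',v'',w''$ are glued to $T'_{d+1}$, contribute length $2a',2b',2c'$ (not $a',b',c'$) to the edges $zu,zv,zw$ of the combinatorial graph — together with the observation that all the \emph{other} material of $T'$ (the part of the original tree $T$, plus the non-tripod parts of the grafted pieces) appears in $\Gamma'$ only as trees dangling off the image of $\Gamma$ and off these three legs, hence is absorbed into the word ``modification.'' Everything else — continuity of $\phi'$, harmonicity, the Riemann–Hurwitz condition at $z$ (where $l(y)$ may exceed $3$ but $m_{\phi'}(z)=1$, so the condition is automatic), connectedness of $\Gamma'$ — is immediate from the Proposition guaranteeing that a gluing datum yields a tropical morphism, so I would simply cite that and not belabor it.
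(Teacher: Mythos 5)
Your skeleton is the right one, and it is essentially what the paper has in mind when it dismisses the lemma as ``straightforward'' (no written proof is given): the image of $T'_{d+1}$ carries the metric of $T'$ undivided because that copy is glued to the others only at three isolated points; everything outside the tripod and the three relevant grafted intervals is a dangling tree; and the only actual content is the length bookkeeping at the three legs. But that one piece of content is exactly where your write-up first goes wrong and then stalls. In step (2) you assert that the image of $T'_{d+1}$ is attached to the image of $\Gamma$ at the three points $u,v,w$, with legs of lengths $a+a'$, $b+b'$, $c+c'$. Neither claim is true: $T'_{d+1}$ is glued to $T'_i,T'_j,T'_k$ at $u'',v'',w''$, and the image of, say, $\psi_i(u'')$ in $\Gamma'$ is not $u$ but the far endpoint of a dangling interval of length $a'$ grafted onto $u$ --- namely the image of the segment $[u',u'']$ of the copy $T'_i$, which is a new, unglued piece of that copy and hence appears in $\Gamma'$ as a separate arc attached to $u$ (the image of $\psi_i(u')$). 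Step (3) then cycles through three mutually inconsistent candidate answers ($a'/2$, a single identified interval, $a+2a'$) and ends with a question mark, so the decisive step is genuinely not established.

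The resolution is one sentence: the two segments $[u',u'']\subset T'_{d+1}$ and $[u',u'']\subset T'_i$ are \emph{distinct} arcs of $\Gamma'$, each of length $a'$ (the number of sheets glued along their interiors is $1$, so the metric is undivided), and they are identified only at the single common endpoint, the image of $u''$. Hence the path in $\Gamma'$ from $z$ (the image of $\psi_{d+1}(y)$) to $u$ runs along $T'_{d+1}$ from $y$ to $u'$ (length $a$), on to $u''$ (length $a'$), and then back down the other arc from the image of $\psi_i(u'')$ to $u$ (length $a'$), for a total of $a+2a'$; this path meets the image of $\Gamma$ only at $u$. The same computation applies at $v$ and $w$. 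The remaining grafted intervals (those over fibres of $u',v',w'$ other than the ones through $u,v,w$, and the $m_\phi(u)-1$ extra copies at $u$ itself, etc.), together with the image under $T'_{d+1}$ of $T$ minus the tripod, are dangling trees, which is exactly what the word ``modification'' absorbs. With that correction your argument closes.
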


As an example, in the figure below, the graph $\Gamma$ together with its tropical morphism $\phi:\Gamma \to T$ is pictured on the left. The right hand side shows the tropical morphism $\phi':\Gamma' \to T'$, where the top tree is attached to $\Gamma$, and next to it, the graph $\Gamma'$ with the dangling trees removed. 
\begin{center}
\includegraphics[scale=.9]{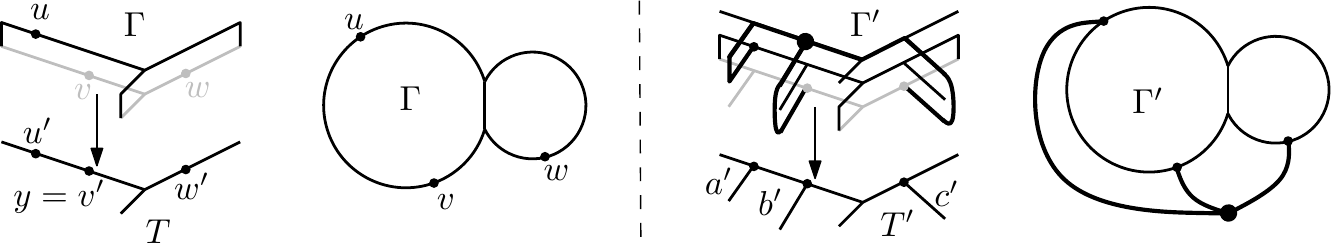}
\end{center}

\subsection*{A lower bound on the dimension in Theorem~\ref{thm:dimension}}

\begin{proof}[Proof of Theorem~\ref{thm:dimension}, lower bound]
We proceed by induction on $d \geq 2$ to exhibit a subset $\mathcal{S}_{g,d}$ of
$\Mg$ of dimension $e(g,d)=\min\{2g+2d-5,\dim \Mg\}$ where the gonality
is at most $d$. The subset will be presented by a finite graph $G=(V,E)$
all of whose vertices are trivalent, together with an open polyhedral
cone $U$ of dimension $e$ in $\RR_{>0}^{E}$ consisting of edge lengths for
which the gonality is at most $d$.

For $d=2$ we are concerned with the locus in $\Mg$ of hyperelliptic
metric graphs. This locus is well-understood \cite{Chan13}, and its dimension
is $2g-1=e(g,2)$ for all $g \geq 1$. Here is a gluing
datum witnessing this dimension:\\

	\begin{center}
  \includegraphics[scale=.8]{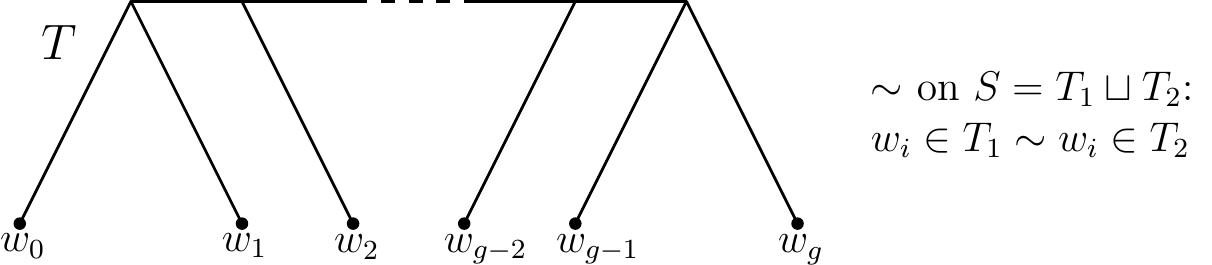}
\end{center}

Next let $d \geq 3$ and assume that we have found suitable subsets
$\mathcal{S}_{g,d-1}$ for all $g$. Then to find $\mathcal{S}_{g,d}$
we pick a suitable subset $\mathcal{S}_{g-2,d-1}$, representated by
$(G,U)$. Note that $e(g,d)=e(g-2,d-1)+6$ provided that $g-2$ is at least
$2$, since then both of the terms in the minimum increase by $6$. If
$g-2 \geq 2$, then any metric graph of genus $g-2$ has only finitely
many automorphisms. Consequently, the number of degrees of additional
freedom when gluing in a tripod is also $6$: this counts the positions
of $u,v,w$ and the positive numbers $a',b',c'$, and the count does not
drop modulo finitely many automorphisms. Hence we are done for $g \geq
4$. 

We deal with the cases $g=2,3$ separately. Every graph with $g=2$ is
hyperelliptic, so for $\mathcal{S}_{g,d}$ with $d \geq 2$ we can take any of the
cells defining $\mathcal{M}^\mathrm{trop}_2$. This leaves the case where $g=3$ and
$d=3$. Here $e(3,3)=6$, and we obtain a subset $\mathcal{S}_{3,3}$ by gluing in
a tripod in a cycle---the reason that this raises the dimension by $5$
rather than $6$ is that the automorphism group of the cycle can move
one of the points, say $u$, to any fixed position.
\end{proof}

\subsection*{Realising all combinatorial types}

We now have almost all ingredients for pro\-ving
Theorem~\ref{thm:construction}, but one more notion is needed for the
existence of a rank-one divisor supported at integral vertices. A finite
subset $S$ of a metric graph $\Gamma$ is called an {\em integral set}
if $\Gamma \setminus S$ is a union of open intervals of length $1$
and half-open intervals of length strictly smaller than $1$. The metric
graph $\Gamma$ has an integral set if and only if it is either a line
segment of arbitrary length, or a single cycle of integral length,
or it has at least one vertex of valency at least three and every line
segment connecting two such vertices has integral length. In the last
case, the integral set is unique, and the closed ends of the half-open
intervals are necessarily valency-one vertices of $\Gamma$.

\begin{proof}[Proof of Theorem~\ref{thm:construction}.]
We proceed by induction on $g$ to construct an open cone $C_G \subseteq
\RR_{>0}^E$ for each trivalent graph $G$ of genus $g$ such that graphs
$\Gamma$ represented by points in $C_G$ have gonality at most $\lceil
(g+2)/2 \rceil$. Moreover, we will do this in such a way that each
metric graph $\Gamma$ corresponding to a point in $\ZZ^E \cap C_G$ has
a modification $\Gamma'$ that admits a tropical morphism $\phi:\Gamma'
\to T$ of degree $\lceil (g+2)/2 \rceil$ to a tree $T$ with the following
additional properties:
\begin{enumerate}
\item $\phi$ maps some integral set $S' \subseteq \Gamma'$ into some integral 
set in $T$; 
\item $S:=S' \cap \Gamma$ is an integral set in $\Gamma$; and 
\item there exists a point $v_0 \in S$ such that $\phi^{-1}(\phi(v_0))
\cap \Gamma \subseteq S$.
\end{enumerate}
Then the divisor $\sum_{v' \in \phi^{-1}(\phi(v_0))} m_\phi(v') v'$
on $\Gamma'$ has rank one, and this remains the case if we move the
chips on $\Gamma' \setminus \Gamma$ to their nearest point on $\Gamma$;
since the points where trees are grafted onto $\Gamma$ to obtain $\Gamma'$
are necessarily in the integral set $S$, this latter divisor is supported
on $S$.

For $g=1$ it is not quite clear how even to define a graph of genus $1$
in which all vertices have valency $3$, but we take as definition the
{\em circle} with no vertices. For this graph the statement is clear:
for any length $a>0$ prescribed to the circle, it has a $2:1$-morphism
$\phi$ to an interval of length $a/2$. If $a$ is integral, then after
choosing an integral set $S$ in the cycle, we can choose $\phi$
such that $\phi^{-1}(\phi(S))=S$.

For $g=2$ there are two possible combinatorial types, and the
statements to be proved are well known for both (the marked points $w$ on $T$
have equivalence class $\{1,2\}$ for $\sim_w$):
	\begin{center}
  \includegraphics[scale=.8]{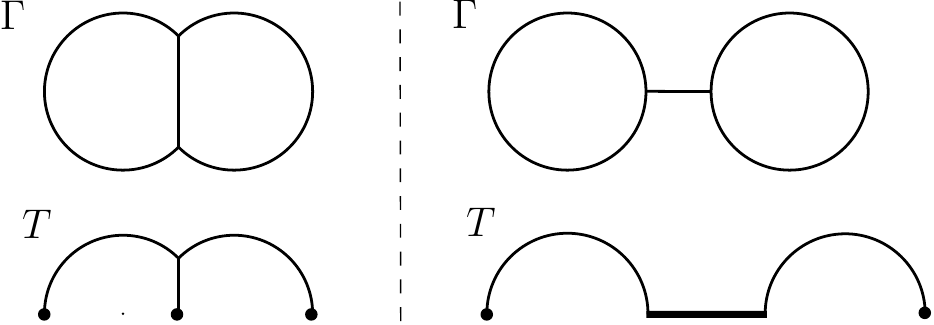}
	\end{center}
In each of the two cases, the open cone equals $\RR^E_{>0}$, and also
the integrality statements are readily verified.

Next, assume that $G=(V,E)$ is a trivalent graph of genus $g>2$. If $G$
has a vertex $y$ that can be removed without disconnecting the graph,
then let $G'=(V',E')$ be the graph obtained by removing $y$ and its
three incident edges $e_1,e_2,e_3$. Note that $G'$ has genus $g-2$. By
assumption, there is an open cone $C_{G'}\subseteq \RR_{>0}^{E'}$ of
dimension $\dim \mathcal{M}^\mathrm{trop}_{g-2}$ consisting of edge
lengths leading to metric graphs with gonality at most $\lceil \frac{g}{2}
\rceil$. By gluing in a tripod we find an open cone $C_G  \subseteq \RR^E$
of the right dimension where the gonality is at most $1+\lceil \frac{g}{2}
\rceil=\lceil \frac{g+2}{2} \rceil$: the inequalities for $C_G$ are
those for $C_{G'}$ plus the conditions that the lengths of $e_1,e_2,e_3$
are sufficiently large (in the terminology of the subsection on gluing
a tripod: larger than $a,b,c$, respectively).

To see that the integrality conditions are preserved, let $\Gamma$ be
the metric graph corresponding to a point in 
$C_G \cap \ZZ^E$. The
restriction to $E'$ defines a metric graph $\Sigma$ of combinatorial type
$G'$, and by the induction hypothesis there is tropical morphism $\psi$
from a modification $\Sigma'$ of $\Sigma$ to a tree $K$ of degree $\lceil
\frac{g}{2} \rceil$ that satisfies the integrality conditions: $\Sigma'$
has an integral set $R'$ such that $\psi(R')$ is contained in an integral
set $U$ of $K$, $R:=\Sigma \cap R'$ is an integral set in $\Sigma$, and
$v_0 \in R$ is such that $\psi^{-1}(\psi(v_0))\cap \Sigma \subseteq R$.

Let $\phi$ be the tropical morphism $\Gamma' \to T$ obtained via gluing
a tripod to the points $u,v,w \in \Sigma$ of $e_1,e_2,e_3$ with
edge lengths $a+2a',b+2b',c+2c'$ as in the subsection on tripods, and
with central vertex $y$. Here $\Gamma'$ is a modification of $\Gamma$.
Then $u,v,w \in R$\footnote{Actually, there is one case where this is
not automatic, namely, when $\Sigma$ is a single cycle. But in this case
we can take $\Sigma'$ equal to $\Sigma$ and {\em choose} $R'$ such that it contains
$u,v,w$.}, and hence $\psi(u),\psi(v),\psi(w) \in U$. We extend $U$
to an integral set $V$ of $T$ by adding the vertices on the new edges
(of lengths $a',b',c' \in \frac{1}{2} \ZZ$) at an integral distance
from $\psi(u),\psi(v),\psi(w)$, respectively. Next, we extend $R'$
to an integral set $S'$ of $\Gamma'$ by
\[ S':=R' \cup \{v \in \Gamma' \setminus \Sigma' \mid \phi(v) \in V\}. \] 
Set $S:=S' \cap \Gamma$. Then we find that each $x \in
\phi^{-1}(\phi(v_0)) \cap \Gamma$ is either in $\psi^{-1}(\psi(v_0))
\cap \Sigma \subseteq R \subseteq S$, or else in $\Gamma' \setminus
\Sigma'$ and hence, since $\phi(x)=\phi(v_0) \in U$, also $x \in S$. Thus $S'$
has the required property.

This concludes the proof for the case where $G$ has a trivalent
vertex $y$ such that removing $y$ does not disconnect the graph. If, on
the other hand, removing {\em any} vertex $y$ of $G$ disconnects $G$,
then any two distinct simple cycles in $G$ intersect in at most one
vertex, i.e., $G$ is a {\em cactus graph}. This case is dealt with by
the proposition below.
\end{proof}

\begin{ex} \label{ex:final}
In the following example, we see two metric graphs of genera $2$ and $4$,
respectively, on the left, along with tropical morphisms of degrees $2$
and $3$, as constructed above: \\
\begin{center}
\includegraphics[scale=.68]{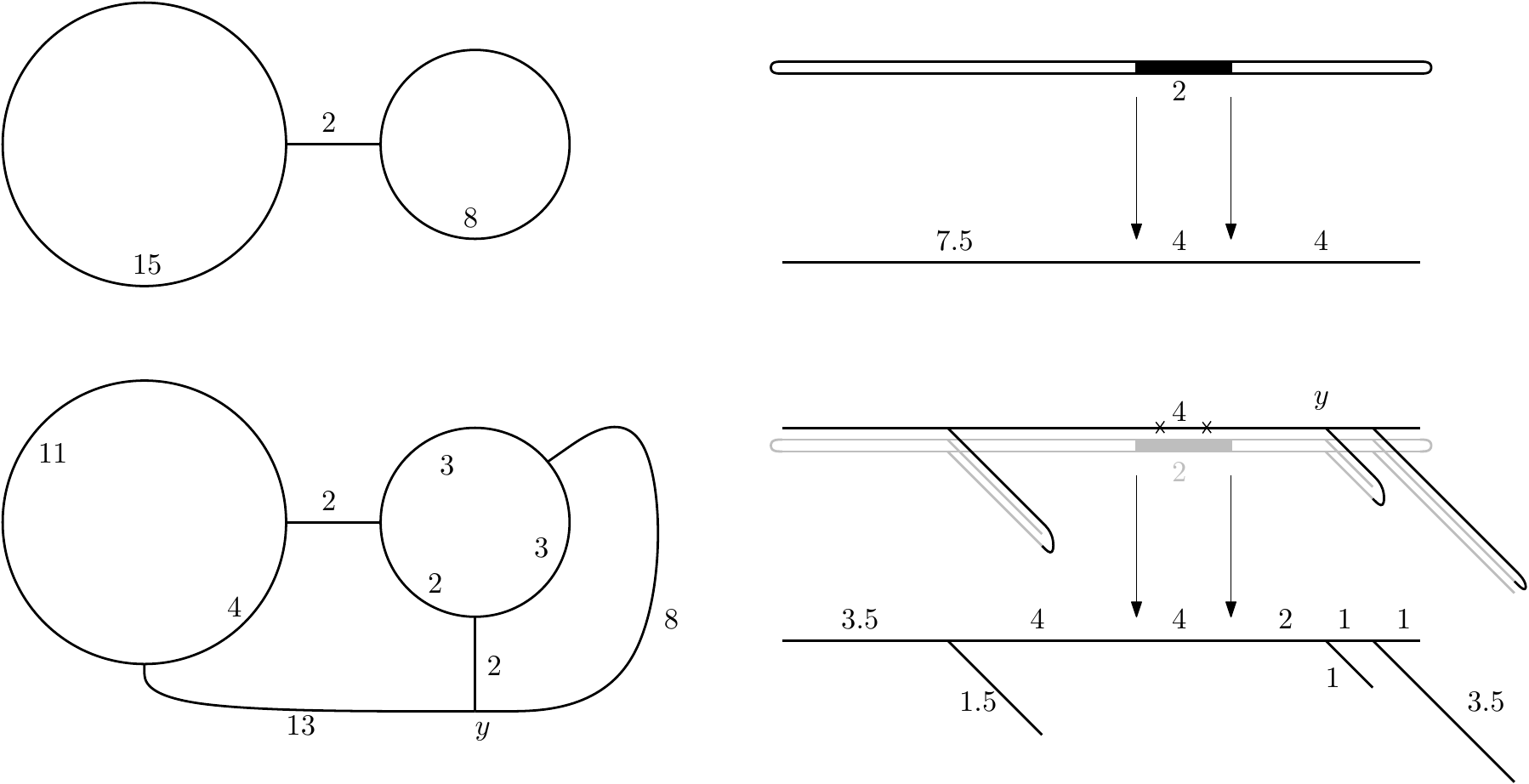}
\end{center}
Focussing on the latter morphism $\phi$, note that the segment between
the two arrows has length $4$ in the tree, and also in one of the copies
of the tree above, but length $2$ where the other two copies are glued
together. The two marked points are in the graph's integral set, but have
a strictly half-integral point in their fibre. So they would {\em not}
be a valid choice for $v_0$ in condition (3) above.
\end{ex}

\subsection*{Cactus graphs}

A metric graph $\Gamma$ is called a {\em cactus graph} if any two simple
cycles (i.e., injective, continuous images of $S^1$) intersect in at
most one point.

\begin{prop}
Any metric cactus graph $\Gamma$ has a modification $\Gamma'$ with a
tropical morphism $\phi$ from $\Gamma'$ to a tree $T$, of degree $\lceil
\frac{g(\Gamma)+2}{2} \rceil $, with the following additional constraints:
\begin{enumerate}
\item if $g(\Gamma)$ is odd and $v_1 \in \Gamma$ is any point, then $\phi$
can be chosen such that $m_\phi(v_1) = 2$ and that moreover $k \geq 2l-1$
where $k$ is the valency of $v_1$ in $\Gamma$ and $l$ is the valency of
$\phi(v_1)$ in $T$;
\item if $\Gamma$ has an integral set $S$ (containing $v_1$
if $g(\Gamma)$ is odd), then $\Gamma'$ has an integral set $S'$
containing $S$, $\phi(S')$ is contained in an integral set of $T$,
and $\phi^{-1}(\phi(S')) \subseteq S'$.
\end{enumerate}
\end{prop}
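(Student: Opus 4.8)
The plan is to induct on the number of simple cycles of the cactus graph $\Gamma$, peeling off one cycle at a time and invoking the tripod-gluing construction (for even genus increments) or a direct hyperelliptic-style construction (for the parity bookkeeping). First I would set up the base cases: a metric tree (genus $0$) admits the identity morphism to itself of degree $1 = \lceil 2/2 \rceil$, and the integrality statements are trivial; a single cycle (genus $1$) was already handled in the proof of Theorem~\ref{thm:construction}, with the extra freedom that the $2:1$ map to an interval can be arranged so that any prescribed point $v_1$ has $m_\phi(v_1)=2$ and so that the fibre over $\phi(v_1)$ is just $\{v_1\}$ together with possibly its antipode, giving $k \geq 2l-1$ at $v_1$ (here $k=2$, $l=1$ for an interior point of the cycle, or one adjusts by a modification). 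Throughout, I would carry along the two auxiliary data: the distinguished point $v_1$ when the running genus is odd, and the integral set $S$ when one is present.

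For the inductive step, pick a leaf cycle $Z$ of the cactus, i.e.\ a simple cycle that meets the rest of $\Gamma$ in a single cut vertex $p$ (such a cycle exists in any cactus with at least one cycle, after possibly also splitting off dangling trees, which can be absorbed into modifications). Write $\Gamma = \Gamma_0 \cup Z$ with $\Gamma_0 \cap Z = \{p\}$ and $g(\Gamma_0) = g(\Gamma) - 1$. The idea is: apply the induction hypothesis to $\Gamma_0$ to get $\psi: \Gamma_0' \to K$ of degree $\lceil (g(\Gamma)+1)/2 \rceil$; then handle $Z$ by attaching, via the tripod construction, material that simultaneously (a) realises the cycle $Z$ hanging at $p$ and (b) bumps the degree appropriately. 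Concretely, when $g(\Gamma_0)$ is odd we are adding the second cycle of a pair and the degree should {\em not} increase: here we fold $Z$ in half onto an interval, gluing its two halves to an existing copy $T_i$ in the gluing datum over a suitable sub-segment containing $\phi(p)$, exactly as in the $g=2$ pictures; the constraint $m_\psi(p)=2$ and $k \geq 2l-1$ at $p$ coming from the odd-genus hypothesis is precisely what guarantees the Riemann-Hurwitz inequality (item (4) of the gluing datum) survives this extra gluing at $p$. When $g(\Gamma_0)$ is even we invoke the tripod construction of the previous subsection to glue in a new copy $T_{d+1}$, using three points on $Z$ and the cut vertex, raising the degree by one; the new distinguished point $v_1$ (needed if $g(\Gamma)$ is now odd) is chosen as the fresh trivalent vertex $y$, where by construction $m_\phi(y) = 2$ and the valency inequality $k \geq 2l-1$ holds.

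For the integrality conclusion (2), I would mimic the argument in the proof of Theorem~\ref{thm:construction}: start from the integral set $R'$ of $\Gamma_0'$ with $\psi(R') \subseteq U$ an integral set of $K$, arrange that $p$ and the chosen points on $Z$ lie in $R$ (choosing $R'$ to contain them when $\Gamma_0$ is a single cycle, as in the footnote), extend $U$ to an integral set $V$ of the enlarged tree $T$ by inserting the vertices at integral distances along the newly attached intervals (whose half-lengths $a',b',c'$ are taken in $\frac12\ZZ$), and then set $S' := R' \cup \{v \in \Gamma' \setminus \Gamma_0' \mid \phi(v) \in V\}$, checking as before that $\phi^{-1}(\phi(S')) \subseteq S'$ because any point of a fibre lying outside $\Gamma_0'$ maps into $V$ by construction. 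The running point $v_1$ is kept inside $S'$ by adding it to the integral set if necessary (its distances to the cycle are integral since it is a vertex of $\Gamma$).

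The main obstacle I anticipate is the parity/Riemann-Hurwitz bookkeeping at the cut vertex $p$ when $g(\Gamma_0)$ is odd: we are gluing {\em extra} sheets at $p$ without increasing the degree, and we must verify that item (4) of the gluing datum still holds there. This is exactly why the proposition carries the strengthened hypothesis ``$m_\phi(v_1)=2$ and $k \geq 2l-1$'' for odd genus — it is an invariant designed to be consumable at the next cut vertex — so the real work is to check that after the gluing the point $p$ again satisfies the analogous inequality (or, if the genus of $\Gamma$ is also odd so that a {\em new} distinguished point $v_1$ is demanded, that the tripod's central vertex $y$ provides one). A secondary nuisance is the presence of dangling trees and of cut vertices of higher valency in $\Gamma$: these are dispatched by noting that grafting trees is a modification and that we may always subdivide/modify so that the relevant gluing happens along intervals, but the casework should be spelled out carefully.
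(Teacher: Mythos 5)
Your overall shape---induct on the cactus structure, treat the two parities of the genus increment differently, and use condition (1) as a ``consumable invariant'' at the cut vertex---is close in spirit to the paper, and your ``fold the leaf cycle onto a new interval attached at $\psi(p)$'' step for the even target genus is essentially sound. But there are two genuine gaps.

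First, and most seriously, your inductive step does not establish conclusion (1). The proposition requires that when $g(\Gamma)$ is odd, $\phi$ can be arranged with $m_\phi(v_1)=2$ and $k\geq 2l-1$ at an \emph{arbitrary prescribed} point $v_1\in\Gamma$; and your own induction needs this full strength, because when $g(\Gamma_0)$ is odd you must invoke the hypothesis with $v_1$ equal to the specific cut vertex $p$ where the next leaf cycle attaches---a point you do not get to choose. Yet your step only produces a distinguished point at the centre of the newly glued piece: if $v_1$ is prescribed deep inside $\Gamma_0$, which has even genus so the induction hypothesis gives no control there, nothing in your construction forces $m_\phi(v_1)=2$. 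The paper's proof is organised around exactly this difficulty: it splits $\Gamma$ at a cut vertex into \emph{two} sub-cacti of arbitrary lower genera (not ``one leaf cycle plus the rest''), chooses the decomposition so that the odd-genus piece contains $v_1$, and bottoms out at the genus-one case, where a slope-$2$ segment is routed from $v_1$ to the cycle; the degrees then combine as $d_1+d_2-1$ or $d_1+d_2-2$ according to parity. A rigid ``peel one leaf cycle'' order cannot place the prescribed point.

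Second, the tripod construction is the wrong tool for the degree-raising step. Gluing in a tripod attaches the new copy $T_{d+1}$ along \emph{three} points and raises the genus by $2$ (this is why, in the lower-bound proof, $e(g,d)=e(g-2,d-1)+6$), whereas attaching a single leaf cycle raises the genus by $1$; the new sheet would have to be glued along only two points, or one should use the paper's mechanism of merging the morphism on $\Gamma_0$ with the degree-$2$ morphism on the cycle at the cut vertex. Moreover, at the tripod's central vertex the new sheet is glued to nothing, so $m_\phi=1$ there, not $2$---so even if a tripod were the right move, its centre could not serve as the distinguished point. (By contrast, the obstacle you flag as the main one, Riemann--Hurwitz at $p$ in the fold case, in fact works out automatically: adding the loop raises the valency of $p$ by $2$ and that of $\psi(p)$ by $1$ while $m$ is unchanged, so the inequality is preserved without using $k\geq 2l-1$.)
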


The former condition implies that the Riemann-Hurwitz inequality $k-2 \geq
2(l-2)$ holds at $v_1$ with a strict inequality.  The latter condition
is stronger than condition (3) in the proof above, where we require only
that the fibre through {\em some} integral point intersects $\Gamma$
only inside $S$. Example~\ref{ex:final} below shows why we could not impose
this stronger condition earlier.

\begin{proof}
We proceed by induction on $g$. For $g=0$ we take $T=\Gamma'=\Gamma$
and $\phi$ the identity map. For $g=1$ let $C$ be the unique simple cycle
in $\Gamma$, of length $a>0$, and let $w$ be the point on $C$
closest to the prescribed point $v_1$. The $2:1$ map from $C$ to an edge
with branchpoints $w$ and the point at distance $a/2$ from $w$ extends
to a modification of $\Gamma$ that has slope $1$ everywhere
except for slope $2$ on the segment connecting $v_1$ and $w$:\\
\begin{center}
  \includegraphics[scale=.78]{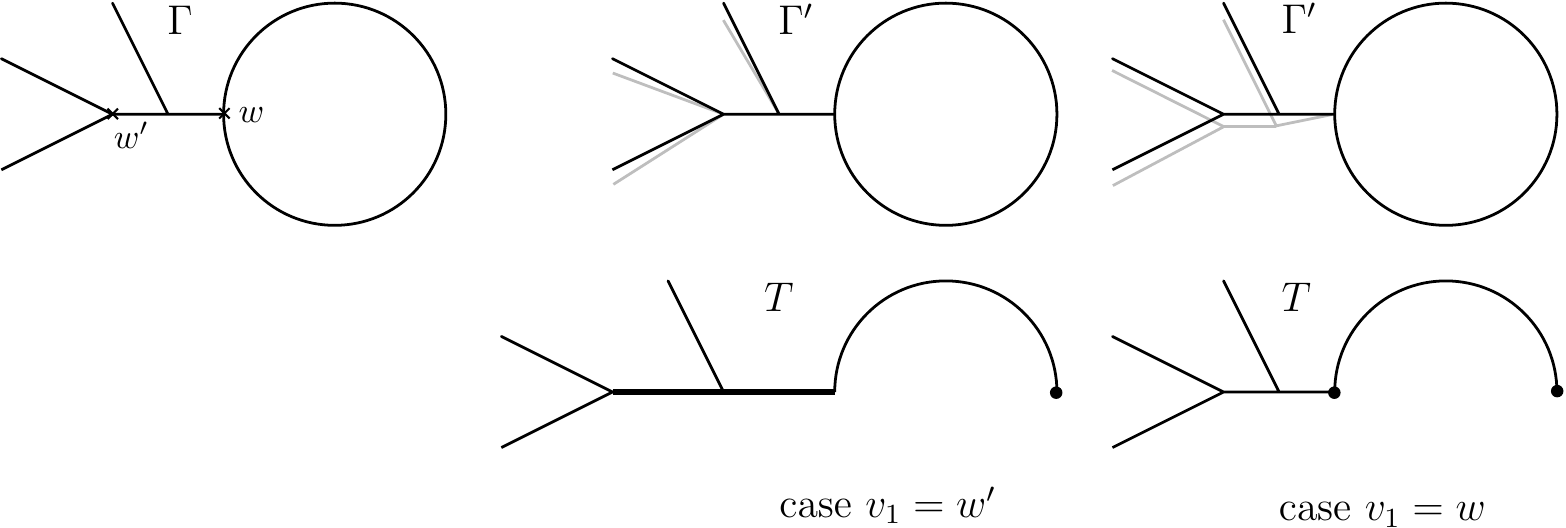}
\end{center}
For (1) we note that $k$ is at least $2l-1$, as required. For the integrality
condition (2), we note that any integral set $S$ of $\Gamma$ has a
unique extension to an integral set $S'$ of $\Gamma'$, and that there is
a unique integral set in $T$ containing the image of $S'$. Note that the
latter inclusion is strict if the segment from $v_1$ to $w$ has positive
(integral) length. Yet, $\phi^{-1}(\phi(S))=S$, as required.

If $\Gamma$ has higher genus, then we can write it as $\Gamma=\Gamma_1
\cup \Gamma_2$ where $\Gamma_1,\Gamma_2 \subseteq \Gamma$ are cactus
graphs of lower genus than $\Gamma$ that intersect in a single point $y$
of $\Gamma$, which we can chose integral if $\Gamma$ has an integral
set. Write $g:=g(\Gamma)$ and $g_i:=g(\Gamma_i)$ for $i=1,2$, and note
that $g=g_1+g_2$. Moreover, if $g$ is odd, then we can (and do) choose the
decomposition such that $g_1$ is odd and that the prescribed point $v_1$
lies in $\Gamma_1$. Furthermore, if $\Gamma$ has an integral set $S$,
then $S \cap \Gamma_i$ is an integral set for $\Gamma_i$ for each $i=1,2$.

By the induction hypothesis, there are modifications $\Gamma_i',\ i=1,2$
of the $\Gamma_i$ with tropical morphisms $\phi_i$ of degrees $\lceil
g_i/2 \rceil + 1$ to trees $T_i$ which further satisfy conditions (1)
and (2) in the proposition. Here we choose $v_1$ equal to $y$ for both
$\phi_1$ and $\phi_2$ if both $g_1$ and $g_2$ are odd (in this case,
since $g$ is even, no point $v_1$ had yet been prescribed).

Let $T$ be the tree obtained by gluing $T_1$ and $T_2$ at $\phi_1(y)$ and
$\phi_2(y)$, respectively, and let $\Gamma'$ be the metric graph obtained
by gluing $\Gamma_1',\Gamma_2'$ at $y$. The metric graph $\Gamma'$
is a modification of $\Gamma$, and an integral set of $\Gamma$ extends
uniquely to one of $\Gamma'$. Let $\psi:\Gamma' \to T$ be the map restricting
to $\phi_i$ on $\Gamma_i'$. Then $\psi$ is harmonic except in the
points in $Y:=\psi^{-1}(\psi(y))$. Apart from $y$, which belongs to
both $\Gamma_i'$, the points in $Y$ are either in $\Gamma_1'$ or in
$\Gamma_2'$ but not both. Let $v_0:=y,v_1,\ldots,v_a$ be the points in
$Y \cap \Gamma_1'$ and let $w_0:=y,w_1,\ldots,w_b$ be the points in $Y
\cap \Gamma_2'$, and write $m_i:=m_{\phi_1}(v_i)$ for $i=0,\ldots,a$
and $n_j:=m_{\phi_2}(w_j)$ for $j=0,\ldots,b$.  We modify $\Gamma'$ by
grafting $m_i$ copies of $T_2$ at $v_i$ for $i=1,\ldots,a$ (not yet at
$v_0=y$!) and grafting $n_j$ copies of $T_1$ at $w_j$ for $j=1,\ldots,b$
(not yet at $w_0=y$!), and we extend $\psi$ to these copies by their
natural maps into $T$. This renders $\psi$ harmonic at $v_1,\ldots,v_a$
and $w_1,\ldots,w_b$, and moreover restores the Riemann-Hurwitz condition
there---e.g., to the valency $v_i$ one adds $m_i$ times the valency of
$\phi_2(y)$ in $T_2$, which is exactly $m_i$ times what was added to
the valency of $\phi_1(v_i)=\phi(y)$ by attaching $T_2$.

So we need only establish harmonicity and the Riemann-Hurwitz condition
at $y$. Let $d_i=\lceil g_i/2 \rceil +1$ be the degree of $\phi_i$.
First, assume that $g_2$ is even. Then the required degree of the map
$\phi$ equals
\[ d:=\lceil g/2 \rceil + 1 = \lceil g_1/2 \rceil + g_2/2 + 1 = d_1+d_2-1. \]
We graft $m_0-1$ further copies of $T_2$ and $n_0-1$ copies of $T_1$
to $y$; this yields the final modification $\Gamma''$ of $\Gamma$ with
the natural extension $\phi:\Gamma'' \to T$ of 
$\psi$. This extension is harmonic everywhere by construction. To check
the Riemann-Hurwitz condition at $y$, let $k_i,k$ denote the valency of
$y$ in $\Gamma_i'$ and $\Gamma''$, respectively, and let $l_i,l$ denote
the valency of $\phi_i(y)$ and $\phi(y)$ in $T_i,T$, respectively.
Then we have $l=l_1+l_2$ and $k=k_1+k_2+(m_0-1)l_2+(n_0-1)l_1$, so
that
\begin{align*}
k-2 &= k_1+k_2+(m_0-1)l_2+(n_0-1)l_1 -2 \\
&\geq m_0(l_1-2)+2+n_0(l_2-2)+2+(m_0-1)l_2+(n_0-1)l_1 -2 \\
&=(m_0+n_0-1)(l_1+l_2-2)\\
&=m_{\phi}(y)(l-2),
\end{align*}
where the inequality follows from the Riemann-Hurwitz inequalities for
the $\phi_i$. 

Second, assume that $g_2$ is odd; then, by assumption, so is $g_1$.
With notation as above we now have
\[ d=\lceil g/2 \rceil + 1 = d_1+d_2-2. \]
Moreover, since we had chosen $y$ as the prescribed point for both
$\phi_1$ and $\phi_2$, we have $m_0=n_0=2$ by property (1). This means
that we need not graft further trees at $y$ and the map $\phi:\Gamma''
\to T$ constructed so far is harmonic there. To check the
Riemann-Hurwitz conditions, we compute 
\[ k-2=k_1+k_2-2 \geq 2l_1-1+2l_2-1-2=2(l-2)=m_\phi(y)(l-2), \]
where we have used that $k_i \geq 2l_i-1$ by property (1).

Finally, if $\Gamma$ has an integral set $S$, then $\Gamma''$
has a unique integral set $S'$ containing $S$, and it contains the
integral sets $S'_i=S' \cap \Gamma_i'$ as well as suitable integral
sets of the trees grafted onto $\Gamma'$ to arrive at $\Gamma''$. The
points in $\phi^{-1}(\phi(S'))$ that are not in the union of the sets
$\phi_i^{-1}(\phi_i(S'_i))$ are in those integral sets of the grafted 
trees, hence also in $S'$. 
\end{proof}

\begin{re}
\begin{enumerate}
\item 
Combining this subsection with the previous one, we find that metric
graphs in the open cone $C_G$ have a modification with a tropical morphism
to a tree that only has slopes $1$ and $2$.
\item 
The existence of divisors of {\em higher} rank on cactus graphs,
under the condition that the Brill-Noether number is nonnegative,
was studied by Jorn van der Pol in his Bachelor's thesis \cite{Pol11}
under an additional assumption on the cactus graph.
\item 
The existence of integral rank-one divisors of degree $\lceil (g+2)/2
\rceil$ on an arbitrary graph of genus $g$ remains conjectural.
Conceivably, Backman's approach to linear equivalence using graph
orientations \cite{Backman14} could lead to such a result. In any case,
following our approach in this paper, we do not see how to cross the
boundary of the cone $C_G$.

\end{enumerate}
\end{re}


\section*{Acknowledgments}

We thank Aart Blokhuis, who provided the proof of
Lemma~\ref{lm:Partition} reproduced in this paper. JD was partially supported
by Vidi and Vici grants from the Netherlands Organisation for Scientific Research
(NWO).

\bibliographystyle{alpha}
\bibliography{diffeq,draismajournal}

\end{document}